\documentclass[11pt]{amsart}
\usepackage{graphicx} % Required for inserting images
\usepackage[margin=1in]{geometry}

\usepackage{amsmath,amssymb,amsthm}
\usepackage{mathtools}
\usepackage[colorlinks=true, allcolors=blue]{hyperref}
\numberwithin{equation}{section} % Make equation numbering depend on section number

\usepackage{booktabs}

\IfFileExists{mathabx.sty}%
  {\DeclareFontFamily{U}{mathx}{\hyphenchar\font45}%
   \DeclareFontShape{U}{mathx}{m}{n}{<->mathx10}{}%
   \DeclareSymbolFont{mathx}{U}{mathx}{m}{n}%
   \DeclareFontSubstitution{U}{mathx}{m}{n}%
   \DeclareMathAccent{\widebar}{0}{mathx}{"73}%
}{%
  \PackageWarning{mathabx}{%
    Package mathabx not available, therefore\MessageBreak substituting
    widebar with overline\MessageBreak }%
  \newcommand{\widebar}[1]{\overline{#1}}%
}
\newcommand{\wb}[1]{\widebar{#1}}

\DeclarePairedDelimiter{\abs}{\lvert}{\rvert}

\DeclareMathOperator*{\argmax}{arg\,max}

\usepackage[most]{tcolorbox}

\usepackage[toc,page]{appendix}
\usepackage[ruled,vlined]{algorithm2e}
\usepackage[noend]{algpseudocode}

\usepackage{todonotes}

\usepackage{mathrsfs}
\DeclareFontFamily{U}{rsfs}{\skewchar\font127 }
\DeclareFontShape{U}{rsfs}{m}{n}{%
   <-6.5> rsfs5
   <6.5-8> rsfs7
   <8-> rsfs10
}{}

\newcommand{\dd}{\mathrm{d}}

\usepackage{bbm}

\newtheorem{theorem}{Theorem}[section]
\newtheorem{definition}{Definition}[section]
\newtheorem{assumption}{Assumption}[section]
\newtheorem{lemma}{Lemma}[section]
\newtheorem{corollary}{Corollary}[section]

\theoremstyle{remark}
\newtheorem{remark}{Remark}[section]

\newtheorem*{notation}{Notation}

\title{State-space models through the lens of ensemble control}

\thanks{This research is supported in part by the National Science Foundation under awards DMS-2309378 and IIS-2403276. We thank Albert Gu and Andrej Risteski for helpful discussions on state-space models.}
\date{\today} % change to the submission date
\author{Ye Feng}
\address{Mathematics Department, Duke University}
\email{ye.feng@duke.edu}

\author{Jianfeng Lu}
\address{Department of Mathematics, Department of Physics, Department of Chemistry, Duke University}
\email{jianfeng@math.duke.edu}

\begin{document}

\begin{abstract}
    State-space models (SSMs) are effective architectures for sequential modeling, but a rigorous theoretical understanding of their training dynamics is still lacking. In this work, we formulate the training of SSMs as an ensemble optimal control problem, where a shared control law governs a population of input-dependent dynamical systems. We derive Pontryagin's maximum principle (PMP) for this ensemble control formulation, providing necessary conditions for optimality. Motivated by these conditions, we introduce an algorithm based on the method of successive approximations. We prove convergence of this iterative scheme along a subsequence and establish sufficient conditions for global optimality. The resulting framework provides a control-theoretic perspective on SSM training.
\end{abstract}

\maketitle

\section{Introduction}

State-space models (SSMs) are a classical and highly versatile mathematical framework for modeling dynamical systems, in which observable sequences are generated by the evolution of latent states governed by differential or difference equations. In the discrete setting, SSMs are also referred to as hidden Markov models. Originating in control systems engineering, SSMs have long served as a foundational tool in time series analysis. Their simple yet expressive formulation, consisting of a latent state dynamics equation coupled with an observation model, has enabled widespread adoption across disciplines including economics \cite{zeng2013state}, neuroscience \cite{linderman2017bayesian,zoltowski2020general}, ecology \cite{glennie2023hidden,newman2023state}, robotics \cite{liu2024robomamba, liu2024point}, and weather forecasting \cite{yang2025wssm}.

In recent years, SSMs have re-emerged as powerful architectures for sequence modeling in machine learning, driven by advances that integrate neural parameterizations with structured dynamical systems. Modern SSM-based models demonstrate strong empirical performance and excellent scalability on long sequences, making them compelling alternatives to attention-based architectures. In particular, selective SSMs \cite{gu2021efficiently} introduce input-dependent gating mechanisms that dynamically modulate state transitions, enabling selective information propagation over long temporal horizons with linear computational complexity. Building on this line of work, the Mamba architecture \cite{gu2024mamba} employs parameter-efficient structured recurrences that achieve transformer-level accuracy while maintaining favorable scaling, stability, and memory efficiency. More recently, Mamba-3 \cite{lahoti2026mamba} further enhances expressivity through complex-valued state models and higher-order discretization, and improves inference efficiency via a multi-input multi-output (MIMO) design. Compared to transformers, which rely on pairwise attention with quadratic complexity in sequence length, SSM-based models represent sequences through dynamical systems with structured evolution laws, providing a principled and efficient mechanism for long-range dependency modeling and an interpretable state representation.

Although compelling empirical evidence confirms the remarkable performance of SSM-based architectures, there remains a pressing need for a rigorous mathematical framework to analyze their fundamental properties, particularly the mechanisms underlying their training and learning dynamics. In the existing literature, the analogy between deep learning and optimal control has become a well-established and fruitful viewpoint. In this framework, the forward propagation of features is interpreted as the trajectory of a dynamical system governed by parameterized equations, while the learning process corresponds to solving an optimal control problem that minimizes a cost functional defined by the training objective.

This control-theoretic interpretation has been developed for several major architectures, including residual networks \cite{han2019mean,li2018optimal}, neural ordinary differential equations (neural ODEs) \cite{li2018maximum,ruiz2023neural}, and more recently, transformers \cite{kan2025optimal,zhang2024optimal}. In these works, the state dynamics in the control problem correspond to the forward pass through the layers of a neural network
\[
    \dot{x}(t) = b(t, x(t), u(t)), \quad x(0) \sim \mu_0,
\]
starting from an initial distribution $\mu_0$ of data inputs. The control variables correspond to the network parameters or weights that influence the state evolution.
This framework has proven useful for understanding the training of deep neural networks and continuous-depth models at scale. Within this setting, Pontryagin's maximum principle (PMP) arises as a continuous-time analogue of backpropagation: the adjoint states correspond to Lagrange multipliers enforcing the dynamic constraints, and updating the control corresponds to moving along the Hamiltonian gradient. This duality between control and learning has inspired a wide range of theoretical analyses, including continuous-depth and mean-field formulations, as well as the design of training algorithms grounded in optimal control principles.

\medskip

The main motivation for the current study is to generalize the control-theoretic viewpoint to the analysis of state-space models. Compared with the formulation discussed above, the main difference for (selective) SSMs is that the state evolution depends on an input signal (the input sequence) in addition to the current latent state and the control. Thus, it is most natural to model the state evolution as
\[
\dot x_\omega(t) = b(t,x_\omega(t),u(t),\omega), 
\qquad x_\omega(0) = x_0(\omega),
\]
where $\omega$ denotes the input signal (see Table~\ref{tab:ssm_mapping} for the correspondence with the standard SSM terminology). Such state dynamics, and thus the associated control problem, correspond to what is known as \emph{ensemble control} in the control literature. 

In an ensemble control problem, one seeks a single control signal that simultaneously governs a continuum of systems indexed by a parameter $\omega \in \Omega$. In the present context, this parameter $\omega$ is naturally interpreted as the input sequence of the SSM (cf.~Table~\ref{tab:ssm_mapping}). The overall cost functional therefore aggregates performance across an ensemble of input signals. In particular, only a single control signal is allowed, as we require a single SSM to generate (approximate) output sequences for multiple diverse input sequences.

The training of structured state-space architectures naturally fits within this ensemble control framework. The heterogeneity of SSMs arises from input-dependent system matrices: different input signals induce distinct dynamical flows even when the model parameters are shared. In particular, selective or input-dependent SSMs exhibit dynamics that evolve as functions of the input signal, making the ensemble interpretation especially relevant: each input sequence induces a distinct dynamical flow, yet the model parameters (controls) are shared across all trajectories. Consequently, analyzing the training dynamics of SSMs requires tools that extend beyond single-system or mean-field formulations, motivating a general ensemble optimal control framework that can capture their structural and statistical properties in a unified manner.

In this work, we develop such a unified mathematical framework for analyzing the training of SSMs through the lens of ensemble optimal control. Specifically, Section~\ref{sec:form} introduces a general class of ensemble control problems characterized by parameter-dependent dynamics and cost functionals integrated over a parameter space, encompassing state-space model training as a particular instance. Section~\ref{sec:max_prin} derives the necessary optimality conditions in the form of PMP adapted to ensemble systems and establishes its specialization to the SSM training problem. Building upon the analytical structure of this ensemble PMP, Section~\ref{sec:alg_conv} proposes an iterative algorithm that constructs approximate optimal controls and proves its convergence to a stationary point satisfying the necessary conditions under suitable assumptions. Finally, Section~\ref{sec:suf_opt} provides a set of sufficient conditions for optimality and demonstrates that the limiting control produced by the proposed algorithm indeed achieves optimality for the class of ensemble control problems under consideration.

\subsection*{Related Work}

Recent studies have increasingly applied optimal control theory to understand, train, and enhance deep learning models. Li et al. \cite{li2018optimal} formulate neural network training as a discrete-time optimal control problem and introduce a Pontryagin-based method of successive approximations that enables gradient-free and stable optimization, even for networks with discrete or ternary weights. Extending this formulation, Li et al. \cite{li2018maximum} treat deep learning in continuous time and employ Pontryagin's maximum principle (PMP) to derive an alternative training algorithm with provable convergence that alleviates common issues of gradient-based methods such as slow convergence and saddle-point trapping. Han et al. \cite{han2019mean} generalize this perspective to population risk minimization by introducing a mean-field optimal control framework governed by the Hamilton-Jacobi-Bellman and Pontryagin principles, thereby establishing a rigorous mathematical link between optimal control and learning dynamics. More recently, Ruiz et al. \cite{ruiz2023neural} analyze neural ordinary differential equations (neural ODEs) through a control-theoretic lens, demonstrating that data classification and universal approximation can be achieved by simultaneously controlling multiple neural ODEs with a shared control law. Furthermore, Chen et al. \cite{chen2022self} exploit a closed-loop control formulation to construct self-healing neural networks capable of autonomously detecting and correcting adversarial perturbations, thus improving robustness.

Recent advances have extended these control-theoretic insights to modern architectures. Zhang et al. \cite{zhang2024optimal} cast vision transformer (ViT) fine-tuning as an optimal control problem solved via the PMP, revealing that low-precision (binary) adaptation matrices can preserve the controllability of their full-precision counterparts. Kan et al. \cite{kan2025optimal} apply optimal control theory to both training and architecture design of transformers, proposing a continuous-time framework that improves performance, generalization, and parameter efficiency while offering theoretical grounding for model design and training improvements.

Starting from a similar control-theoretic viewpoint, our work focuses specifically on state-space models (SSMs). While earlier methods apply optimal control ideas to feedforward networks, neural ODEs, or transformers, we develop an optimal control formulation tailored to the dynamics of SSMs. Closely related to our work, recent theoretical studies on selective state-space models, such as \cite{muca2024theoretical}, formulate input-dependent SSMs as linear controlled differential equations and analyze their expressive power using tools from rough path theory, establishing universality and theoretical representation guarantees. In contrast, our approach adopts an ensemble control perspective, in which selectivity is not modeled as an input-dependent state evolution along a single trajectory, but rather as control over a family of dynamical systems indexed by different inputs. This formulation emphasizes controllability, structural properties, and system-level behavior of selective SSMs, and thus allows us to derive explicit optimality conditions, approximation schemes, and convergence results, offering theoretical and practical insights not captured in previous work.

\begin{notation}
    For a vector $x = (x_1, \ldots, x_n) \in \mathbb{R}^n$ and a matrix $A = (a_{ij}) \in \mathbb{R}^{m \times n}$, $\abs{x}$ denotes the Euclidean norm of $x$ and $\abs{A}$ denotes the matrix 2-norm (spectral norm) of $A$. Throughout the paper, $\|\cdot\|$ denotes the supremum norm of a function, and $\Vert \cdot \Vert_{\mathrm{op}}$ denotes the operator norm.
\end{notation}

\section{Formulation} \label{sec:form}

The central motivation of this work lies in the observation that the training dynamics of modern structured state-space models can be naturally interpreted as an ensemble optimal control problem. In this section, we first present a general mathematical framework for ensemble control, describing a family of systems driven by a common control input. Subsequently, we demonstrate that the training of state-space models fits within this framework as a specific instance of ensemble optimal control, thereby establishing the foundation for the analytical developments in subsequent sections.

\subsection{Ensemble Optimal Controls}

We introduce the formulation of a general ensemble optimal control problem. Let $ T > 0 $ be the terminal time, $ U \subseteq \mathbb{R}^m $ the control set, and $ (\Omega, \Sigma, \mu) $ be a probability space representing the underlying uncertainty or parameter variation in the system. Each point $\omega \in \Omega$ labels a distinct member of the ensemble, such as a different instance of a system with varying parameters.

The dynamics for each individual system in the ensemble are given by
\begin{align}\label{eq:ctrl_sys}
    \begin{cases}
        \dot{x}(t, \omega)
        = b(t, x(t, \omega), u(t), \omega),
         & t \in [0, T],\ \omega \in \Omega, \\
        x(0, \omega)
        = x_0(\omega),
         & \omega \in \Omega,
    \end{cases}
\end{align}
where $ b : [0, T] \times \mathbb{R}^n \times U \times \Omega \to \mathbb{R}^n $ describes the system dynamics, which may vary with $\omega$, $ x_0 : \Omega \to \mathbb{R}^n $ is the (possibly random or distributed) initial condition, and $ u : [0, T] \to U \subseteq \mathbb{R}^m $ is the control field, assumed measurable in time. Crucially, the control $u(\cdot)$ is independent of $\omega$, meaning the same control acts simultaneously on all systems in the ensemble, though their responses differ due to their dependence on $\omega$.

The goal is to find a control $u(\cdot)$ that optimizes the performance of the entire ensemble, typically measured through an expected or average cost. The cost functional is defined as
\begin{align}\label{eq:cost_func_gen}
    J(u(\cdot))
    := \int_{0}^{T}
    \mathbb{E}_{\omega}[L(t, x(t,\omega), u(t), \omega)]
    \, \mathrm{d}t
    + \mathbb{E}_{\omega}[g(x(T,\omega), \omega)],
\end{align}
where
$ L: [0, T] \times \mathbb{R}^n \times U \times \Omega \to \mathbb{R} $ is the running cost,
and $g: \mathbb{R}^n \times \Omega \to \mathbb{R} $ is the terminal cost.
The expectation over $\omega \in \Omega$ represents the averaging of costs across the ensemble with respect to the measure $\mu$. The admissible control set is given by
\[
    \mathcal{U}[0, T]
    := \{\, u : [0, T] \to U \mid u(\cdot)\ \text{is measurable} \,\}.
\]
A pair $(u(\cdot), \{x(\cdot, \omega)\}_{\omega \in \Omega})$ is called \textit{admissible} if $u(\cdot) \in \mathcal{U}[0, T]$ and for each $\omega \in \Omega$, the trajectory $x(\cdot, \omega)$ is the unique solution of \eqref{eq:ctrl_sys}. An admissible pair $(u^*(\cdot), \{x^*(\cdot, \omega)\}_{\omega \in \Omega})$ is called \textit{optimal} if \begin{align*}
    J(u^*(\cdot)) \leq J(u(\cdot)),
\end{align*}
for any admissible $(u(\cdot), \{x(\cdot, \omega)\}_{\omega \in \Omega})$. Formally, the ensemble control problem is to find an optimal control $u^{*}(\cdot)$ by solving the following minimization problem:
\begin{align*}
    (\mathbf{P}) \quad
    \inf_{u(\cdot) \in \mathcal{U}[0, T]} J(u(\cdot)).
\end{align*}
Conceptually, this involves a single control $u(\cdot)$ steering an infinite collection of systems, indexed by $\omega$, toward a desired collective behavior. The ensemble's performance is measured in an average sense via the expected cost $J(u(\cdot))$.

For convenience, define the ensemble-averaged running and terminal costs:%\jl{it might be better to write as $\wb{L}$}
\[
    \wb{L}(t, x(t, \cdot), u(t))
    := \mathbb{E}_{\omega}[ L(t, x(t, \omega), u(t), \omega)],
    \qquad
    \wb{g}(x(T, \cdot))
    := \mathbb{E}_{\omega}[g(x(T, \omega), \omega)].
\]
Then the cost functional \eqref{eq:cost_func_gen} can be written more compactly as
\[
    J(u(\cdot))
    = \int_{0}^{T} \wb{L}(t, x(t, \cdot), u(t))\, \mathrm{d}t
    + \wb{g}(x(T, \cdot)).
\]
This representation resembles the standard optimal control formulation, except that at each time $t$ the state variable $x(t, \cdot)$ is now a function $\Omega \to \mathbb{R}^n$, describing the entire ensemble's state rather than a single deterministic vector. In particular, the ensemble problem is infinite-dimensional, and the goal is to choose a common control $u(\cdot)$ that achieves the best trade-off for all members of the ensemble.

\subsection{State-space Model Training as an Ensemble Control Problem}

We interpret the training of state-space models within the framework of ensemble optimal control. In this formulation, the objective is to approximate a target mapping
\[
    \mathcal{F}: \Omega \to \Omega', \qquad \omega \mapsto \omega',
\]
which associates each input trajectory $\omega : [0,T] \to \mathbb{R}^d$ with a corresponding output trajectory $\omega' : [0,T] \to \mathbb{R}^{d'}$. The sets $\Omega$ and $\Omega'$ are assumed to be Borel subsets of $C([0,T]; \mathbb{R}^{d})$ and $C([0,T]; \mathbb{R}^{d'})$, respectively, endowed with the topology induced by the supremum norm.

To formalize the ensemble setting, we need to equip $\Omega$ with a probability measure. Let $\Sigma$ denote the Borel $\sigma$-algebra on $\Omega$ generated by the norm topology. The ambient space $C([0,T]; \mathbb{R}^d)$ admits various probability measures, including Dirac and Wiener measures. If $\Omega$ is a Borel subset of positive measure, then any such measure on $C([0,T]; \mathbb{R}^d)$ can be restricted to $\Omega$, thereby inducing a probability measure $\mu$ on $(\Omega, \Sigma)$. In what follows, we fix the probability space $(\Omega, \Sigma, \mu)$.

Given an input function $\omega$, a state-space model produces an output trajectory through an underlying latent (hidden) state $x(\cdot, \omega) :[0, T] \to \mathbb{R}^n$ governed by a dynamical system that is linear in the state variable $x$:
\begin{align}\label{eq:ssm_dyn}
    \begin{cases}
        \dot{x}(t, \omega)
        = A(t, \omega; u(t))\, x(t, \omega)
        + B(t, \omega; u(t)),
         & t \in [0, T], \\[4pt]
        x(0, \omega) = x_0(\omega).
    \end{cases}
\end{align}
The matrix function $ A(t, \omega; u(t)) \in \mathbb{R}^{n \times n} $ and vector field $B(t, \omega; u(t)) \in \mathbb{R}^{n} $ are parametrized by a measurable control function $ u : [0, T] \to U \subseteq \mathbb{R}^m $, where $U$ represents the parameter space. For each time $t$ and input $\omega$, the choice of $u(t)$ determines the effective system dynamics.

The model output is obtained by decoding the hidden state via a fixed linear readout:
\[
    y(t, \omega) = C x(t, \omega), \qquad t \in [0, T],
\]
where $ C \in \mathbb{R}^{d' \times n} $ is a prescribed decoding matrix. The decoding matrix $C$ can be extended to an input-dependent and trainable form, $C = C(t, \omega; u(t))$, allowing adaptive readout dynamics. However, for simplicity, we assume $C$ to be a constant matrix in the present formulation.

The objective is to choose the time-dependent control parameters $u(\cdot)$ so that the predicted output of the model $y(\cdot, \omega)$ closely matches the target output $\mathcal{F}(\omega)(\cdot)$ for all inputs $\omega \in \Omega$.

We define the admissible set of controls by
\[
    \mathcal{U}[0, T]
    := \bigl\{
    u : [0, T] \to U
    \,\big|\,
    u(\cdot)\ \text{is measurable}
    \bigr\}.
\]
For each admissible control $u(\cdot) \in \mathcal{U}[0, T]$, let $\{x(\cdot, \omega)\}_{\omega \in \Omega}$ be the corresponding state trajectories solving the state equation \eqref{eq:ssm_dyn}. We then define the cost functional
\begin{equation*}
    \begin{aligned}
        J(u(\cdot))
         & := \alpha \int_0^T \mathbb{E}_{\omega}
        \abs{C x(t, \omega) - \mathcal{F}(\omega)(t)}^2
        \, \mathrm{d}t
        + \beta \int_0^T \abs{u(t)}^2\, \mathrm{d}t \\
         & \ = \int_0^T \mathbb{E}_{\omega}
        \big[\, \alpha \abs{C x(t, \omega) - \mathcal{F}(\omega)(t)}^2
            + \beta \abs{u(t)}^2 \,\big] \, \mathrm{d}t.
    \end{aligned}
\end{equation*}
The positive constants $\alpha$ and $\beta$ balance two objectives: data fitting and regularization. The first enforces agreement between model output $C x(t,\omega)$ and the target $\mathcal{F}(\omega)(t)$, while the second penalizes excessive control effort to keep $u(\cdot)$ bounded in $L^2([0,T];U)$. With these definitions in place, the ensemble optimal control formulation of training can be stated as:
\begin{align*}
    (\mathbf{PS}) \quad
    \inf_{u(\cdot) \in \mathcal{U}[0, T]} J(u(\cdot)).
\end{align*}

It is evident that problem $(\mathbf{PS})$ is a particular instance of the general ensemble optimal control formulation $(\mathbf{P})$ with the following correspondences: for every $t \in [0,T]$, $x \in \mathbb{R}^n$, $u \in U$ and $\omega \in \Omega$,
\begin{align*}
    b(t, x, u, \omega)
     & = A(t, \omega; u)\, x + B(t, \omega; u),                          \\
    L(t, x, u, \omega)
     & = \alpha \abs{C x - \mathcal{F}(\omega)(t)}^2 + \beta \abs{u}^2,  \\
    g(x, \omega)
     & = 0.
\end{align*}
In this interpretation, the collection of input signals $\{\omega\}_{\omega \in \Omega}$ plays the role of an ensemble of systems being driven by a common control $u(\cdot)$. The objective is to design a single control trajectory $u(\cdot)$ that achieves the best performance, measured by output alignment with $\mathcal{F}(\omega)$, across the entire ensemble rather than for any single input.

To ensure that problem $(\mathbf{PS})$ is well-posed and mathematically tractable, we impose the following structural and regularity conditions on the input-output spaces, the control set, and the system dynamics.

\begin{assumption}\label{asp:main_asp}
    We make the following assumptions regarding problem $(\mathbf{PS})$:
    \begin{enumerate}
        \item The input set $\Omega $ is a compact and convex subset of $\tilde{\Omega}$, a bounded open subset of $C([0,T]; \mathbb{R}^d)$ equipped with the supremum norm, and $\Sigma$ is the Borel $\sigma$-algebra on $\Omega$ generated by the supremum norm topology; $\mu$ is an arbitrary fixed probability measure on $(\Omega, \Sigma)$.
        \item The output set $\Omega'$ is a compact subset of $C([0,T]; \mathbb{R}^{d'})$ equipped with the supremum norm. In particular, this implies that there exists $M_{\Omega'} > 0$ such that for any $\omega' \in \Omega'$, $\Vert \omega' \Vert \leq M_{\Omega'}$.
        \item The input-output map $\mathcal{F}: \tilde{\Omega} \to \Omega'$ and the initial condition $x_0: \tilde{\Omega} \to \mathbb{R}^n$ are both continuously Fr\'echet differentiable.
        \item The set $U$ is a nonempty, convex and compact subset of $\mathbb{R}^m$.
        \item The maps $A:[0,T] \times \tilde{\Omega} \times U \to \mathbb{R}^{n\times n}$ and $B: [0,T] \times \tilde{\Omega} \times U \to \mathbb{R}^n$ are (jointly) continuous and bounded, i.e. there exist constants $M_A, M_B > 0$ such that
              \begin{align*}
                  \abs{A(t, \omega; u)} \leq M_A, \quad \abs{B(t, \omega; u)} \leq M_B
              \end{align*}
              for any $(t, \omega, u) \in [0,T] \times \tilde{\Omega} \times U$;
        \item The maps $A(t, \omega; u)$ and $B(t, \omega; u)$ are twice continuously differentiable with respect to $u$. 
        Assume that there exist finite constants $G_A$, $G_B$, $H_A$ and $H_B$ such that 
        \begin{align*}
            G_A := \sup_{(t,\omega,u)} \|D_u A(t,\omega;u)\|_{\mathrm{op}}, \qquad
            G_B := \sup_{(t,\omega,u)} \|D_u B(t,\omega;u)\|_{\mathrm{op}},  \\
            H_A := \sup_{(t,\omega,u)} \|D_u^2 A(t,\omega;u)\|_{\mathrm{op}}, \qquad
            H_B := \sup_{(t,\omega,u)} \|D_u^2 B(t,\omega;u)\|_{\mathrm{op}}.
        \end{align*}
        where $(t, \omega, u)$ is taken over $[0,T] \times \tilde{ \Omega} \times U$. 
            We further assume that $D_u A(t, \omega; u)$ and $D_u B(t, \omega; u)$ are equicontinuous in $t$, that is, there exist moduli of continuity $\theta_A(\cdot)$ and $\theta_B(\cdot)$ such that for any $t, s \in [0,T]$, 
              \[
                \sup_{(\omega,u)} \Vert D_u A(t, \omega;u) - D_u A(s, \omega;u) \Vert_{\mathrm{op}} \leq \theta_A(\abs{t-s}),
              \] 
              and 
              \[
                \sup_{(\omega,u)} \Vert D_u B(t, \omega;u) - D_u B(s, \omega;u) \Vert_{\mathrm{op}} \leq \theta_B(\abs{t-s}).
              \]
              where $(\omega, u)$ is taken over $\tilde{ \Omega} \times U$. Here $\Vert \cdot \Vert_{\mathrm{op}}$ is the operator norm induced by the matrix 2-norm and the vector Euclidean norm used in this paper (see Remark~\ref{rmk:op_norm_AB}). 
        \item $A(t, \omega;u)$ and $B(t, \omega; u)$ are Fr\'echet differentiable with respect to $\omega$, and $\partial_{\omega} A(t, \omega; u)$ and $\partial_{\omega} B(t, \omega; u)$ are continuous. In view of (1) and (4), this implies
              \begin{align*}
                  \sup_{(t, \omega,u) \in [0,T]\times \tilde{\Omega} \times U} \Vert \partial_{\omega} A(t, \omega; u) \Vert_{\mathrm{op}} < \infty, \quad \sup_{(t, \omega,u) \in [0,T]\times \tilde{\Omega} \times U} \Vert \partial_{\omega} B(t, \omega; u) \Vert_{\mathrm{op}} < \infty.
              \end{align*}
        \item The constant matrix $C \in \mathbb{R}^{d' \times n}$ has full column rank, $d' \geq n$. 
    \end{enumerate}
\end{assumption}

\begin{remark}\label{rmk:op_norm_AB}
    We explain the operator norm introduced in Assumption~\ref{asp:main_asp} (6) and give a method of estimating the constants $G_A, G_B, H_A, H_B$. For any $(t,\omega,u) \in [0,T] \times \tilde{\Omega} \times U$, the first-order derivative
        $D_u A(t,\omega;u):\mathbb{R}^m\to\mathbb{R}^{n\times n}$ is the linear map
        \[
        D_uA(t, \omega;u) [v] = \sum_{i=1}^{m} v_i \dfrac{\partial A(t, \omega;u )}{\partial u_i}, \qquad \forall v \in \mathbb{R}^m,
        \]
        and the second-order derivative
        $D_u^2A(t,\omega;u):\mathbb{R}^m\times\mathbb{R}^m\to\mathbb{R}^{n\times n}$
        is the bilinear map
        \[
        D_u^2A(t,\omega;u)[v,w]
        =\sum_{i,j=1}^m v_i w_j \,\frac{\partial^2 A(t,\omega;u)}{\partial u_i\,\partial u_j},
        \qquad \forall v,w\in\mathbb{R}^m.
        \]
        The associated operator norm of the linear and bilinear maps are defined by
        \[
        \|D_u A(t,\omega;u)\|_{\mathrm{op}}
        :=\sup_{\substack{v\in\mathbb{R}^m\\ \abs{v}=1}}
        \abs{D_u A(t,\omega;u)[v]}.
        \]
        and 
        \[
        \|D_u^2A(t,\omega;u)\|_{\mathrm{op}}
        :=\sup_{\substack{v,w\in\mathbb{R}^m\\ \abs{v}=\abs{w}=1}}
        \abs{D_u^2A(t,\omega;u)[v,w]}.
        \]
        The operator norms of $D_u B(t, \omega; u)$ and $D_u^2 B(t, \omega;u)$ are defined likewise. 
        Suppose one has the elementwise bounds
        \[
        \left|\frac{\partial A(t,\omega;u)}{\partial u_i} \right|\le \bar G_A, \quad \left|\frac{\partial^2 A(t,\omega;u)}{\partial u_i\,\partial u_j} \right|\le \bar H_A,
        \qquad \forall 1 \leq i, j \leq m,\ \forall (t,\omega,u) \in [0,T] \times \tilde{\Omega} \times U.
        \]
        Then, for any $v,w\in\mathbb{R}^m$, by the triangle inequality, we have 
        \[
        \abs{D_uA(t, \omega;u) [v]} \leq \sum_{i=1}^{m} \abs{v_i} \left|\dfrac{\partial A(t, \omega;u )}{\partial u_i}\right| \leq \bar G_A \sum_{i=1}^{m} \abs{v_i} \leq \sqrt{m}\, \bar G_A\, \abs{v}, \nonumber
        \]
        and 
        \[
        \abs{D_u^2A(t,\omega;u)[v,w]}
        \le \sum_{i,j=1}^m \abs{v_i}\,\abs{w_j}\,
        \left|\frac{\partial^2 A(t,\omega;u)}{\partial u_i\,\partial u_j}\right|
        \le \bar H_A \Bigl(\sum_{i=1}^m \abs{v_i}\Bigr)\Bigl(\sum_{j=1}^m \abs{w_j}\Bigr) \le m\,\bar H_A\,\abs{v}\,\abs{w}.
        \]
        Hence,
        \[
        \|D_u A(t,\omega;u)\|_{\mathrm{op}} \le \sqrt{m} \,\bar G_A, \qquad \|D_u^2A(t,\omega;u)\|_{\mathrm{op}} \le m\,\bar H_A.
        \]
        Consequently, we have $G_A \leq \sqrt{m}\, \bar{G}_A$ and $H_A \le m\,\bar H_A$. The estimation for $G_B$ and $H_B$ is similar. 
\end{remark}

\subsection{Examples}
In practice, one often considers matrix-valued functions of the form
\[
    A(t, \omega; u) = A(\omega(t); u), \quad B(t, \omega; u) = B(\omega(t); u),
\]
so that the matrices $A$ and $B$ depend only on the current value of $\omega$ at time $t$, rather than on its entire trajectory. In this setting, the mappings $A$ and $B$ are jointly continuous in $(t, \omega, u)$ provided that the functions $(\xi, u) \mapsto A(\xi; u)$ and $(\xi, u) \mapsto B(\xi; u)$ are continuous. By the Arzelà--Ascoli theorem, the compactness of $\Omega$ in $C([0,T];\mathbb{R}^d)$ implies that the elements of $\Omega$ are equicontinuous. Therefore, $D_u A$ and $D_u B$ are equicontinuous in $t$ provided that $D_u A(\xi; u)$ and $D_u B(\xi; u)$ are equicontinuous in $\xi$.

Let us survey several representative examples arising in practice. A basic and widely used model is the standard linear state-space system
\begin{equation}\label{eq:linear_ssm}
    \dot{x}(t, \omega) = \mathbb{A}\,x(t, \omega) + \mathbb{B}\,\omega(t),
\end{equation}
where $\mathbb{A} \in \mathbb{R}^{n \times n}$ and $\mathbb{B} \in \mathbb{R}^{n \times d}$. The input signal is $\omega = (\omega_i)_{1 \leq i \leq d}$, where $\omega_i$ denotes the $i$-th input channel. If we regard $\mathbb{A}$ and $\mathbb{B}$ as trainable parameters, i.e., as the control variables of the model, then \eqref{eq:linear_ssm} clearly fits into the structural framework described in Assumption~\ref{asp:main_asp}.

In many structured state-space models (SSMs), the dynamics operate independently across input channels. A canonical example is the structured state-space sequence model (S4) \cite{gu2021efficiently,muca2024theoretical}, which can be viewed as a special case of \eqref{eq:linear_ssm} with both $\mathbb{A}$ and $\mathbb{B}$ block-diagonal. More precisely, let $n = Nd$ for some $N \in \mathbb{N}$ and write
\[
    \mathbb{A} =
    \begin{pmatrix}
        A_1 &     &        &     \\
            & A_2 &        &     \\
            &     & \ddots &     \\
            &     &        & A_d
    \end{pmatrix}
    \in \mathbb{R}^{Nd \times Nd}, \qquad
    \mathbb{B} =
    \begin{pmatrix}
        \tilde{B} &   &        &   \\
          & \tilde{B} &        &   \\
          &   & \ddots &   \\
          &   &        & \tilde{B}
    \end{pmatrix}
    \in \mathbb{R}^{Nd \times d},
\]
where each $A_i \in \mathbb{R}^{N \times N}$ is diagonal and $\tilde{B} \in \mathbb{R}^{N \times 1}$. In this case, the system decomposes into $d$ independent subsystems,
\begin{equation}\label{eq:diag_linear_ssm}
    \dot{x}_i(t, \omega) = A_i x_i(t, \omega) + \tilde{B} \omega_i(t),
    \qquad 1 \leq i \leq d,
\end{equation}
so that $x_i(t,\omega)$ depends only on the $i$-th channel $\omega_i$, i.e., $x_i(t,\omega) = x_i(t,\omega_i)$.

In practice, SSMs are implemented in discrete time via a suitable discretization of \eqref{eq:diag_linear_ssm}. For instance, under a zero-order hold (ZOH) scheme \cite[Chapter~2]{aastrom2013computer} with channel-dependent step size $\Delta_i$, one obtains the recurrence
\begin{equation}\label{eq:s4_rec}
    x_{i,l} = \bar{A}_i\,x_{i,l-1} + \bar{B}_i \omega_{i,l},
    \qquad 1 \leq l \leq L,\; 1 \leq i \leq d,
\end{equation}
where $\bar{A}_i = \exp(\Delta_i A_i)$ and $\bar{B}_i$ is determined accordingly by the discretization formula. Here, the gates $\bar{A}_i$ and $\bar{B}_i$ are channel-specific but constant across time. The selective state-space model (S6) \cite{gu2024mamba,muca2024theoretical} extends (S4) by allowing the discretization to depend on the current input. Specifically, the recurrence becomes
\begin{equation}\label{eq:s6_rec}
    x_{i,l} =  \bar{A}_i(\omega_l)\, x_{i, l-1} + \bar{B}_i(\omega_l)\, \omega_{i,l},
    \qquad 1 \leq l \leq L,\; 1 \leq i \leq d,
\end{equation}
where $\omega_l = (\omega_{1,l}, \dots, \omega_{d,l})$ denotes the full input vector at time step $l$. Unlike \eqref{eq:s4_rec}, the matrices $\bar{A}_i(\omega_l)$ and $\bar{B}_i(\omega_l)$ now depend on all input channels at time $l$. This input dependence is typically introduced through an input-controlled step size
\[
    \Delta_i: \mathbb{R}^d \to \mathbb{R},
    \qquad
    \Delta_i(y) = \operatorname{softplus}(\alpha_i \cdot y + \beta_i),
\]
with trainable parameters $\alpha_i \in \mathbb{R}^d$ and $\beta_i \in \mathbb{R}$. The discrete gates are then defined by
\[
    \bar{A}_i(y) = \exp(\Delta_i(y) A_i),
    \qquad
    \bar{B}_i(y) = (B\, y)\,\Delta_i(y),
\]
where each $A_i$ is diagonal and $B \in \mathbb{R}^{N \times d}$ is a shared linear projection across channels.

As shown in \cite{muca2024theoretical}, by introducing a small parameter $\delta > 0$ and parameterizing the step size as $\Delta_i(y) = \delta \operatorname{softplus}\big((\alpha_i \cdot y + \beta_i)/\delta\big)$, one finds that to leading order in $\delta$, the recurrence \eqref{eq:s6_rec} reduces to
\[
    x_{i,l} = x_{i,l-1} + A_i\, x_{i,l-1} \sigma(\alpha_i \cdot \omega_l + \beta_i)\,\delta + (B\, \omega_l)\,\omega_{i,l}\,\sigma(\alpha_i \cdot \omega_l + \beta_i)\,\delta,
\]
which is precisely the forward Euler discretization of the controlled ODE
\begin{equation}\label{eq:cont_limit_s6}
    \dot{x}_i(t, \omega) = A_i\, x_i(t, \omega)\, \sigma(\alpha_i \cdot \omega(t) + \beta_i) + (B\, \omega(t))\,\omega_i(t)\,\sigma(\alpha_i \cdot \omega(t) + \beta_i),
\end{equation}
where $\sigma(z) = \operatorname{softplus}(z/\delta)$.

In this formulation, the trainable parameters are $\{A_i\}_{i=1}^d$, $B$, and $\{\alpha_i,\beta_i\}_{i=1}^d$, so the control variable $u$ can be identified with the vectorization of all these parameters. The continuous-time dynamics \eqref{eq:cont_limit_s6} therefore take the general form
\[
    \dot{x}(t, \omega) = A(\omega(t);u)\,x(t, \omega) + B(\omega(t);u),
\]
with a block-diagonal gate
\[
    A(\omega(t);u)
    =
    \begin{pmatrix}
        A_1\, \sigma({\alpha}_1 \cdot \omega(t) + {\beta}_1)
         &                                                      &        &                                                      \\
         & A_2\, \sigma({\alpha}_2 \cdot \omega(t) + {\beta}_2)
         &                                                      &                                                               \\
         &                                                      & \ddots &                                                      \\
         &                                                      &        & A_d\, \sigma({\alpha}_d \cdot \omega(t) + {\beta}_d)
    \end{pmatrix},
\]
and a channel-wise gated projection
\[
    B(\omega(t);u)
    =
    \begin{pmatrix}
        (B\,\omega(t))\,\omega_1(t)\,
        \sigma({\alpha}_1 \cdot \omega(t) + {\beta}_1)
        \\[0.4em]
        (B\,\omega(t))\,\omega_2(t)\,
        \sigma({\alpha}_2 \cdot \omega(t) + {\beta}_2)
        \\
        \vdots
        \\
        (B\,\omega(t))\,\omega_d(t)\,
        \sigma({\alpha}_d \cdot \omega(t) + {\beta}_d)
    \end{pmatrix}.
\]
The resulting coefficients $A(\omega(t);u)$ and $B(\omega(t);u)$ satisfy the regularity and structural conditions required in Assumption~\ref{asp:main_asp}. In particular, the equicontinuity of $D_u A(\xi;u)$ and $D_u B(\xi; u)$ follows from the smoothness of the softplus activation and the fact that the affine maps $\xi \mapsto \alpha_i \cdot \xi + \beta_i$ form a uniformly Lipschitz family when the parameters are restricted to a compact set $U$.

Readers familiar with the machine learning literature on state-space models (SSMs) may find the correspondence summarized in Table~\ref{tab:ssm_mapping} helpful.

\begin{table}[ht]
\centering
\caption{Correspondence between standard SSM terminology and ensemble control formulation.}
\label{tab:ssm_mapping}
\begin{tabular}{c|c|l}
\toprule
\textbf{SSM terminology} & \textbf{Symbol in this paper} & \textbf{Description} \\
\midrule
Input sequence & $\omega(t)$ & external input signal / token sequence \\
Hidden state & $x(t,\omega)$ & internal state trajectory of the model \\
State dynamics & $\dot x(t,\omega)$ & continuous-time state evolution \\
State matrices & $A(\omega(t);u),\,B(\omega(t);u)$ & input-dependent transition operators \\
Trainable parameters & $u$ & vectorization of all learnable matrices \\
Model output & $y(t, \omega)=Cx(t,\omega)$ & linear readout from the hidden state \\
\bottomrule
\end{tabular}
\end{table}

In particular, the learnable parameters of the SSM correspond to the control variable $u$ in our formulation. 
For example, in the (S6) model described above, $u$ collects the matrices $\{A_i\}_{i=1}^d$, $B$, and the gating parameters $\{\alpha_i,\beta_i\}_{i=1}^d$. 
The input signal $\omega$ acts as the external driving signal of the controlled dynamical system, while the hidden state trajectory $x(t,\omega)$ corresponds to the latent state of the SSM.

\section{Pontryagin's Maximum Principle} \label{sec:max_prin}

In this section, we present a set of necessary conditions for optimality in the ensemble control problems (\textbf{P}) and (\textbf{PS}), known as Pontryagin's maximum principle. We begin by formalizing the notion of a local minimizer for the control problem, and then state Pontryagin's maximum principle first for the general ensemble problem (\textbf{P}), followed by the specific problem (\textbf{PS}), which arises as a corollary under Assumption~\ref{asp:main_asp}.

\begin{definition}
    An admissible pair $(u^*(\cdot), \{x^*(\cdot, \omega)\}_{\omega \in \Omega})$ is called a $W^{1,1}$-local minimizer for problem (\textbf{P}) or (\textbf{PS}) if there exists $\epsilon > 0$ such that
    \begin{align*}
        J(u^*(\cdot)) \leq J(u(\cdot)),
    \end{align*}
    for any admissible $(u(\cdot), \{x(\cdot, \omega)\}_{\omega \in \Omega})$ that satisfies
    \begin{align*}
        \Vert x^*(\cdot, \omega) - x(\cdot, \omega) \Vert_{W^{1,1}} \leq \epsilon, \quad  \forall\, \omega \in \Omega.
    \end{align*}
\end{definition}

We state Pontryagin's maximum principle for problem (\textbf{P}) as follows.

\begin{theorem} \label{thm:cont_max_prin}
    Let $(u^*(\cdot), \{x^*(\cdot, \omega)\}_{\omega \in \Omega})$ be a $W^{1,1}$-local minimizer for problem (\textbf{P}). Suppose that there exists $\delta > 0$ such that
    \begin{itemize}
        \item[(A1)] $(\Omega, \rho_{\Omega})$ is a complete separable metric space.
        \item[(A2)] $U$ is a nonempty and closed subset of $\mathbb{R}^m$.
        \item[(A3)] The maps $b: [0,T] \times \mathbb{R}^n \times U \times \Omega \to \mathbb{R}^n$, $L: [0,T] \times \mathbb{R}^n \times U \times \Omega \to \mathbb{R}$ and $g: \mathbb{R}^n \times \Omega \to \mathbb{R}$ are measurable, and there exists a constant $M > 0$ such that for $\varphi(t, x, u, \omega) = b(t, x, u, \omega), L(t, x, u, \omega), g(x, \omega)$, we have
              \begin{align*}
                  \abs{\varphi(t, x, u, \omega) - \varphi(t, x', u', \omega)} \leq M(\abs{x - x'} + \abs{u - u'}), \quad \abs{\varphi(t, x, u, \omega)} \leq M,
              \end{align*}
              for any $(t, \omega) \in [0,T] \times \Omega$, $x, x' \in B(x^*(t, \omega), \delta)$, and $u, u' \in U$.
        \item[(A4)] There exists a modulus of continuity $\theta:[0, \infty) \to [0, \infty)$ such that for $\varphi(t, x, u, \omega) = b(t, x, u, \omega), L(t, x, u, \omega)$, we have
              \begin{align*}
                  \int_{0 }^{T} \sup_{x \in B(x^*(t, \omega), \delta), u \in U} \abs{\varphi(t, x, u, \omega') - \varphi(t, x, u, \omega'')} \dd t \leq \theta(\rho_{\Omega}(\omega', \omega'')),
              \end{align*}
              and
              \begin{align*}
                  \sup_{x \in B(x^*(T, \omega), \delta)} \abs{g(x, \omega') - g(x, \omega'')} \leq \theta(\rho_{\Omega}(\omega', \omega'')),
              \end{align*}
              for any $\omega, \omega', \omega'' \in \Omega$.
        \item[(A5)] The map $x \mapsto g(x, \omega)$ is differentiable in $B(x^*(T, \omega), \delta)$ for every $\omega \in \Omega$, and the map $(x, \omega) \mapsto \nabla_x g(x, \omega)$ is continuous.
        \item[(A6)] For $\varphi = b, L$, the map $x \mapsto \varphi(t, x, u, \omega)$ is continuously differentiable on $B(x^*(t, \omega), \delta)$, and there exists a constant $M > 0$ such that
              \begin{align*}
                  \abs{\nabla_x \varphi(t, x, u, \omega) - \nabla_x \varphi(t, x', u', \omega)} \leq M(\abs{x - x'} + \abs{u - u'}), \quad \abs{\nabla_x \varphi(t, x,u, \omega)} \leq M,
              \end{align*}
              for any $(t, \omega) \in [0,T]\times \Omega$, $x, x' \in B(x^*(t, \omega), \delta)$, and $u, u' \in U$.
    \end{itemize}
    Then there exists $p(\cdot, \cdot): [0,T] \times \Omega \to \mathbb{R}^n$ such that
    \begin{itemize}
        \item[(i)] $p(\cdot, \omega) \in W^{1,1}([0,T]; \mathbb{R}^n)$ for all $\omega \in \Omega$,
        \item[(ii)] For a.e. $t \in [0,T]$,
              \begin{multline} \label{eq:max_cond_gen}
                  \mathbb{E}_{\omega} \left[ p(t, \omega) \cdot b(t, x^*(t, \omega), u^*(t), \omega) - L(t, x^*(t, \omega), u^*(t), \omega) \right] \\
                  = \max_{u \in U}\, \mathbb{E}_{\omega} \left[ p(t, \omega) \cdot b(t, x^*(t, \omega), u, \omega) - L(t, x^*(t, \omega), u, \omega) \right].
              \end{multline}
        \item[(iii)] For all $\omega \in \Omega$,
              \begin{align}\label{eq:adj_eqn}
                  \begin{cases}
                      \dot{p}(t, \omega) = - \nabla_x b(t, x^*(t, \omega), u^*(t), \omega)^{\top} p(t, \omega) + \nabla_x L(t, x^*(t, \omega), u^*(t), \omega)\quad \text{a.e. } t \in [0,T], \\
                      p(T, \omega) = - \nabla_x g(x^*(T, \omega), \omega).
                  \end{cases}
              \end{align}
    \end{itemize}
\end{theorem}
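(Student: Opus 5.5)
We will prove Theorem~\ref{thm:cont_max_prin} by the classical needle (spike) variation argument, adapted so that the same spike is applied simultaneously to every member of the ensemble and the resulting first-order cost variations are averaged over $\omega$. First, for each $\omega$ we define $p(\cdot,\omega)$ directly as the solution of the linear adjoint equation \eqref{eq:adj_eqn}. By (A5)--(A6) its coefficients $\nabla_x b(t,x^*(t,\omega),u^*(t),\omega)$ and $\nabla_x L(\cdots)$ are bounded and measurable in $t$, so $p(\cdot,\omega)$ exists, is unique, and lies in $W^{1,1}$ (indeed $W^{1,\infty}$). We also get a uniform bound $\sup_{t,\omega}\abs{p(t,\omega)}\le C(M,T)$ from Gr\"onwall. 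This settles (i) and (iii). It remains to show that the resulting $p$ satisfies the averaged maximum condition (ii). Measurability of $\omega\mapsto p(t,\omega)$, $x^*(t,\omega)$ must be checked so that $\mathbb{E}_\omega$ makes sense. Here we would use (A1) and (A4): the modulus $\theta$ gives continuity of $\omega\mapsto x^*(\cdot,\omega)$ in $C([0,T])$ through Gr\"onwall, and continuity of $\omega \mapsto \nabla_x g(x^*(T,\omega),\omega)$ by (A5) then gives continuity of $p$ in $\omega$.

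Next, fix $v\in U$ and $\tau\in(0,T)$, and let $u^\varepsilon=v$ on $[\tau,\tau+\varepsilon)$ and $u^\varepsilon=u^*$ elsewhere; this control is admissible since $U$ is closed and the control is measurable. Because $b$ is bounded by $M$ and Lipschitz, the perturbed trajectory satisfies $\sup_\omega\Vert x^\varepsilon(\cdot,\omega)-x^*(\cdot,\omega)\Vert_{W^{1,1}}=O(\varepsilon)$. Hence for $\varepsilon$ small the trajectories stay in the $\delta$-tubes where (A3)--(A6) hold, and $u^\varepsilon$ is an admissible competitor in the $W^{1,1}$-local minimality. We then perform the standard expansion for each fixed $\omega$. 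Let $y^\varepsilon=(x^\varepsilon-x^*)/\varepsilon$ on $[\tau+\varepsilon,T]$. Then $x^\varepsilon(t,\omega)-x^*(t,\omega)=\varepsilon\,\xi(t,\omega)+o(\varepsilon)$, where $\xi$ solves the linearized equation
\[
\dot\xi=\nabla_x b(t,x^*,u^*,\omega)\xi,\qquad \xi(\tau,\omega)=b(\tau,x^*(\tau,\omega),v,\omega)-b(\tau,x^*(\tau,\omega),u^*(\tau),\omega).
\]
This holds for every Lebesgue point $\tau$ of $t\mapsto b(t,x^*(t,\omega),u^*(t),\omega)$ and of $t\mapsto b(t,x^*(t,\omega),v,\omega)$. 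Pairing with $p$ via $\tfrac{d}{dt}(p\cdot\xi)$ and using \eqref{eq:adj_eqn}, we obtain
\[
0\le \frac{J(u^\varepsilon)-J(u^*)}{\varepsilon}\to -\mathbb{E}_\omega\big[H(\tau,\omega,v)-H(\tau,\omega,u^*(\tau))\big],
\]
where $H(t,\omega,u)=p\cdot b(t,x^*,u,\omega)-L(t,x^*,u,\omega)$. Here the $o(\varepsilon)$ remainders, which are uniformly bounded by the Lipschitz constants, pass through $\mathbb{E}_\omega$ by dominated convergence.

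The main obstacle is the ``for a.e.\ $\tau$'' quantifier. The set of Lebesgue points depends on $\omega$ (and on $v$), and an uncountable union of null sets need not be null. So we cannot simply take a common good $\tau$ for all $\omega$. We would handle this in two ways.

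\textbf{Averaged Lebesgue points.} We work with the averaged functions $t\mapsto \mathbb{E}_\omega[b(t,x^*,v,\omega)\cdot(\text{weights})]$ and justify the limit in the averaged sense. Fubini applied to the function $(\tau,\omega)\mapsto$ (difference quotient error) shows that the error tends to zero in $L^1(\mu)$ for a.e.\ $\tau$.

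\textbf{Separability.} Alternatively, we use (A1) and (A4) to pick a countable dense set $\{\omega_k\}$ together with a countable dense set $\{v_j\}\subset U$. We take $\tau$ to be a common Lebesgue point for all of these, transfer to general $\omega$ through the uniform modulus $\theta$ (which controls the integrated oscillation in $\omega$ uniformly in $x,u$), and transfer to general $v$ by Lipschitz continuity in $u$.

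In either route, we get for a.e.\ $\tau$ and all $v\in U$ the inequality $\mathbb{E}_\omega H(\tau,\omega,v)\le\mathbb{E}_\omega H(\tau,\omega,u^*(\tau))$, which is exactly \eqref{eq:max_cond_gen}.
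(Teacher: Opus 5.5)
Your argument is correct in its main line, but it follows a different route from the paper. The paper, following \cite[Theorem 3.3]{bettiol2019necessary}, never perturbs $u^*$ directly. It first treats finite $\Omega$, where the ensemble is a single system in $\mathbb{R}^{nN}$ and the classical maximum principle applies. It then approximates the general separable space by finite ensembles, extracts limits of the adjoint arcs from the uniform bounds in (A3) and (A6), and passes to the limit in the adjoint equation and the averaged maximum condition. That limiting step is where separability (A1), the modulus $\theta$ of (A4) and the $\omega$-continuity in (A5) are used, and the scheme is built to survive end-point constraints, where multipliers are obtained nonconstructively. You instead exploit that (P) has a free endpoint and data $C^1$ in $x$. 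The adjoint is written down explicitly for each $\omega$ with no limiting procedure, and one spike applied to the whole ensemble is an admissible competitor because the $W^{1,1}$ perturbation is $O(\varepsilon)$ uniformly in $\omega$. This is more elementary and self-contained, and (A1), (A4) and the $\omega$-continuity in (A5) play no role in it beyond measurability. The price is that it does not extend to end-point constraints without an extra separation argument.

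Two points need fixing. First, your second way of handling the $\omega$-dependent Lebesgue points fails. (A4) controls only the oscillation in $\omega$ integrated over $[0,T]$, so on $[\tau,\tau+\varepsilon]$ it gives only $\theta(\rho_\Omega(\omega,\omega_k))/\varepsilon$, and it says nothing about values at the particular time $\tau$. Lebesgue points common to a dense $\{\omega_k\}$ therefore do not transfer to other $\omega$.

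Your first way is correct: apply Tonelli to the set of pairs $(\tau,\omega)$ for which $\tau$ is not a Lebesgue point. A cleaner version avoids per-$\omega$ Lebesgue points altogether: expand $\frac{d}{dt}\big(p\cdot(x^\varepsilon-x^*)\big)$ exactly rather than passing through $\xi$. Set $h_\omega(t,u):=p(t,\omega)\cdot b(t,x^*(t,\omega),u,\omega)-L(t,x^*(t,\omega),u,\omega)$. Then (A3) and (A6) give, for every $\omega$,
\begin{align*}
&\int_0^T\big[L(t,x^\varepsilon,u^\varepsilon(t),\omega)-L(t,x^*,u^*(t),\omega)\big]\,\dd t+g(x^\varepsilon(T,\omega),\omega)-g(x^*(T,\omega),\omega)\\
&\qquad=-\int_\tau^{\tau+\varepsilon}\big[h_\omega(t,v)-h_\omega(t,u^*(t))\big]\,\dd t+O(\varepsilon^2)+r_\varepsilon(\omega).
\end{align*}
Here the $O(\varepsilon^2)$ term is uniform in $\omega$. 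The term $r_\varepsilon(\omega)$ is the first-order Taylor remainder of $g(\cdot,\omega)$ at $x^*(T,\omega)$, and it satisfies $\abs{r_\varepsilon}\le C\varepsilon$ and $r_\varepsilon/\varepsilon\to 0$ pointwise. After taking $\mathbb{E}_\omega$, you need only Lebesgue points of the single function $t\mapsto\mathbb{E}_\omega[h_\omega(t,v)-h_\omega(t,u^*(t))]$. Finish with a countable dense $\{v_j\}\subset U$ and Lipschitz continuity in $u$.

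Second, your measurability argument through continuity of $\omega\mapsto x^*(\cdot,\omega)$ needs continuity of $x_0$, which is not among (A1)--(A6). Obtain joint measurability of $x^*$ and $p$ directly from measurability of the data instead; this is implicitly required for $J$ to be defined at all.
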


We call $\{p(\cdot, \omega)\}_{\omega \in \Omega}$ the \textit{adjoint variables}, \eqref{eq:adj_eqn} the \textit{adjoint equation} (corresponding to the pair $(u^*(\cdot), \{x^*(\cdot, \omega)\}_{\omega \in \Omega})$), and \eqref{eq:max_cond_gen} the \textit{maximum condition}. The maximum condition can be written as
\[
    H(x^*(t,\cdot), p(t,\cdot), u^*(t), t)
    = \max_{u \in U}
    H(x^*(t,\cdot), p(t,\cdot), u, t),
    \quad \forall t \in [0, T],
\]
where the \textit{Hamiltonian} functional $H$ is given by
\begin{align*}
    H(x(t, \cdot), p(t, \cdot), u, t) := \mathbb{E}_{\omega} \left[ p(t, \omega) \cdot b(t, x(t, \omega), u, \omega) - L(t, x(t, \omega), u, \omega) \right].
\end{align*}
Compared with the state equation \eqref{eq:ctrl_sys}, the adjoint equation \eqref{eq:adj_eqn} evolves backward in time; hence we also call \eqref{eq:ctrl_sys} the \textit{forward problem (equation)} and \eqref{eq:adj_eqn} the \textit{backward problem (equation)}.

The proof of Theorem \ref{thm:cont_max_prin} is a direct adaptation of \cite[Theorem 3.3]{bettiol2019necessary}. The argument first treats the case where $\Omega$ is finite, so that the ensemble problem reduces to a standard optimal control problem. One then approximates a general separable space by finite ensembles, uses uniform bounds to obtain compactness, and passes to the limit in the corresponding optimality conditions.

It is easy to verify that under Assumption \ref{asp:main_asp}, any $W^{1,1}$-local minimizer of the problem (\textbf{PS}) satisfies conditions (A1)--(A6) of Theorem \ref{thm:cont_max_prin} for some $\delta > 0$. Therefore, as a direct corollary of Theorem \ref{thm:cont_max_prin}, we state the following version of Pontryagin's maximum principle for problem (\textbf{PS}).

\begin{theorem}\label{thm:cont_max_prin_ps}
    Under Assumption \ref{asp:main_asp}, let $(u^*(\cdot), \{x^*(\cdot, \omega)\}_{\omega \in \Omega})$ be a $W^{1,1}$-local minimizer for problem (\textbf{PS}). Then there exists $p(\cdot, \cdot): [0,T] \times \Omega \to \mathbb{R}^n$ such that
    \begin{itemize}
        \item[(i)] $p(\cdot, \omega) \in W^{1,1}([0,T]; \mathbb{R}^n)$ for all $\omega \in \Omega$;
        \item[(ii)] For a.e. $t \in [0, T]$,
              \begin{align}\label{eq:max_cond_ps}
                  \mathbb{E}_{\omega} \left[ p(t, \omega) \cdot \bigg( A(t, \omega; u^*(t))x^*(t, \omega) + B(t, \omega; u^*(t)) \bigg)
                  - \alpha \abs{Cx^*(t, \omega) - \mathcal{F}(\omega)(t)}^2 - \beta \abs{u^*(t)}^2 \right] \\
                  = \max_{u \in U}\, \mathbb{E}_{\omega} \left[ p(t, \omega) \cdot \bigg( A(t, \omega; u)x^*(t, \omega) + B(t, \omega; u) \bigg) - \alpha \abs{Cx^*(t, \omega) - \mathcal{F}(\omega)(t)}^2 - \beta \abs{u}^2 \right]; \nonumber
              \end{align}
        \item[(iii)] For all $\omega \in \Omega$,
              \begin{align}\label{eq:adj_eqn_ps}
                  \begin{cases}
                      \dot{p}(t, \omega) = - A(t, \omega; u^*(t))^\top p(t, \omega) + 2 \alpha C^\top (Cx^*(t, \omega) - \mathcal{F}(\omega)(t)), \quad \text{a.e. } t \in [0,T], \\
                      p(T, \omega) = 0.
                  \end{cases}
              \end{align}
    \end{itemize}
\end{theorem}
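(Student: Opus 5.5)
The plan is to obtain Theorem~\ref{thm:cont_max_prin_ps} as a corollary of Theorem~\ref{thm:cont_max_prin}. We specialize to
\[
b(t,x,u,\omega)=A(t,\omega;u)x+B(t,\omega;u),\qquad L=\alpha\abs{Cx-\mathcal{F}(\omega)(t)}^2+\beta\abs{u}^2,\qquad g\equiv 0,
\]
and check (A1)--(A6) for some $\delta>0$. Once these are verified, the conclusions of Theorem~\ref{thm:cont_max_prin} yield (i)--(iii) directly. Two computations are needed for this. First, $\nabla_x b=A(t,\omega;u^*(t))$, so $\nabla_x b^\top p=A^\top p$. Second, $\nabla_x L=2\alpha C^\top(Cx-\mathcal{F}(\omega)(t))$. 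Since $\nabla_x g=0$, the terminal condition becomes $p(T,\omega)=0$. The maximum condition \eqref{eq:max_cond_gen} then becomes \eqref{eq:max_cond_ps} verbatim.

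\textbf{Step 1: a priori bounds.} We first obtain a uniform bound on the states. Since $\Omega$ is compact, $x_0$ is continuous on $\tilde\Omega$ and hence bounded on $\Omega$. Gr\"onwall's inequality with $\abs{A}\le M_A$ and $\abs{B}\le M_B$ then gives
\[
\abs{x^*(t,\omega)}\le R_0:=(\sup_\Omega\abs{x_0}+M_BT)e^{M_AT}
\]
uniformly in $(t,\omega)$. We fix any $\delta>0$ (say $\delta=1$) and work on the compact set $\abs{x}\le R_0+\delta$.

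\textbf{Step 2: checking (A1)--(A3), (A5), (A6).} Condition (A1) holds because $\Omega$ is a compact subset of the Banach space $C([0,T];\mathbb{R}^d)$, so it is complete and separable. Condition (A2) holds because $U$ is compact. For (A3), measurability follows from joint continuity. On the ball $\abs{x}\le R_0+\delta$ we have boundedness $\abs{b}\le M_A(R_0+\delta)+M_B$. The bound on $L$ uses $\abs{\mathcal{F}(\omega)(t)}\le M_{\Omega'}$ and the boundedness of $U$. The Lipschitz bounds in $x$ come from $\abs{A}\le M_A$ and from the gradient of $L$ being bounded on bounded sets. The Lipschitz bounds in $u$ come from the mean value theorem with $G_A,G_B$, together with $\abs{u}^2$ being Lipschitz on compact $U$. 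Condition (A5) is trivial since $g=0$. For (A6), we use $\nabla_x b=A$, which is Lipschitz in $u$ with constant $G_A$ and independent of $x$. We also use $\nabla_x L=2\alpha C^\top(Cx-\mathcal F(\omega)(t))$, which is Lipschitz in $x$, independent of $u$, and bounded on the ball.

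\textbf{Step 3: checking (A4), the main obstacle.} We need the $\omega$-continuity of $b$ and $L$ in integrated-sup form, with a modulus depending only on $\rho_\Omega(\omega',\omega'')=\|\omega'-\omega''\|$. We would argue this via Assumption~\ref{asp:main_asp}(7) and the convexity of $\Omega$. Segments between $\omega'$ and $\omega''$ stay in $\Omega\subset\tilde\Omega$, so the mean value inequality gives
\[
\abs{A(t,\omega';u)-A(t,\omega'';u)}\le K_A\|\omega'-\omega''\|,
\]
with $K_A=\sup\|\partial_\omega A\|_{\mathrm{op}}$, which is finite by item (7). The same holds for $B$. Hence
\[
\sup_{\abs{x}\le R_0+\delta,\,u}\abs{b(t,x,u,\omega')-b(t,x,u,\omega'')}\le\big((R_0+\delta)K_A+K_B\big)\|\omega'-\omega''\|.
\]
For $L$, the difference is bounded by $2\alpha(\abs{C}(R_0+\delta)+M_{\Omega'})\abs{\mathcal{F}(\omega')(t)-\mathcal{F}(\omega'')(t)}$. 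The map $\mathcal{F}$ is $C^1$ on $\tilde\Omega$, so its derivative is bounded on the compact convex $\Omega$ (alternatively, $\mathcal F$ is uniformly continuous on the compact set $\Omega$). This gives a modulus for $\sup_t\abs{\mathcal{F}(\omega')(t)-\mathcal{F}(\omega'')(t)}$. Integrating over $[0,T]$ yields (A4) with a linear (or at least continuous) modulus $\theta$. The terminal part of (A4) is vacuous since $g=0$. The one delicate point is that the suprema in (A4) are over balls around $x^*(t,\omega)$ for arbitrary $\omega$. This is handled by the uniform bound $R_0$ from Step 1, which is why that bound is established first.

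Having verified (A1)--(A6), we apply Theorem~\ref{thm:cont_max_prin} and substitute the computed gradients, which yields (i)--(iii) of Theorem~\ref{thm:cont_max_prin_ps}.
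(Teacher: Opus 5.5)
Your proposal is correct and follows the same route as the paper. The paper also derives this theorem as a direct corollary of Theorem~\ref{thm:cont_max_prin}: it asserts that (A1)--(A6) hold for some $\delta>0$ under Assumption~\ref{asp:main_asp}, then substitutes $\nabla_x b = A$, $\nabla_x L = 2\alpha C^\top(Cx-\mathcal{F}(\omega)(t))$ and $g\equiv 0$. Your Steps 1--3 supply the verification the paper leaves as ``easy to verify'', including the uniform bound on $x^*$ that handles the suprema over balls around $x^*(t,\omega)$ in (A3), (A4) and (A6).
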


\begin{remark}
    Although stated here, the maximum principle (Theorem~\ref{thm:cont_max_prin_ps}) is not used in the proof of Theorem~\ref{thm:conv_max_prin} in the next section. Theorem~\ref{thm:cont_max_prin_ps} only serves as an inspiration in designing Algorithm~\ref{alg:iter_alg}.
\end{remark}

\section{Method of Successive Approximations} \label{sec:alg_conv}

In this section, we introduce an iterative algorithm inspired by Pontryagin's maximum principle to approximate an optimal solution of (\textbf{PS}). The proposed scheme, known as the method of successive approximations, constructs a sequence of admissible controls by recursively updating the control through the associated state and adjoint equations together with the maximum condition. We show that, for sufficiently large penalty parameter $\beta$, the generated sequence is precompact in the uniform topology and that every limit point satisfies the necessary optimality conditions. The detailed procedure is outlined in Algorithm \ref{alg:iter_alg}, and its convergence properties are established in Theorem \ref{thm:conv_max_prin} below.

Before presenting Algorithm \ref{alg:iter_alg} and analyzing its convergence properties, we give some a priori estimates for admissible pairs for (\textbf{PS}), which are summarized in the following Lemma \ref{lmm:a_priori}. The proof is deferred to Appendix~\ref{sec:app}.

\begin{lemma}\label{lmm:a_priori}
    Under Assumption \ref{asp:main_asp}, define constants
    \begin{align*}
        M_X := (\Vert x_0 \Vert + T M_B)e^{T M_A}, \quad M_P := 2 \alpha T \abs{C} (\abs{C} M_X + M_{\Omega'}) e^{T M_A}.
    \end{align*}
    For any continuous $u(\cdot)$, there exist a unique solution $x(\cdot, \omega)$ to the state equation \eqref{eq:ssm_dyn} and a unique solution $p(\cdot, \omega)$ to the adjoint equation \eqref{eq:adj_eqn_ps} for each $\omega \in \Omega$,
    and the following hold:
    \begin{enumerate}
        \item $\abs{x(t, \omega)} \leq M_X$ and $\abs{p(t, \omega)} \leq M_P$ for all $(t, \omega) \in [0,T] \times \Omega$;
        \item The state variable $x(t, \omega) \in C^1([0,T] \times \Omega; \mathbb{R}^n)$, and $\partial_{\omega} x(t, \omega)$ satisfies the following first-order variational equation:
              \begin{align}\label{eq:var_eqn_x}
                  \begin{cases}
                      \dot{z}(t, \omega) = A(t, \omega; u(t)) z(t, \omega) + \partial_{\omega} A(t, \omega; u(t))x(t, \omega) + \partial_{\omega} B(t, \omega; u(t)), \quad t \in [0,T], \\
                      z(0, \omega) = \partial_{\omega} x_0(\omega).
                  \end{cases}
              \end{align}
              Recall that $\partial_{\omega} A$ and $\partial_{\omega} B$ are Fr\'echet derivatives (see Assumption~\ref{asp:main_asp}).
        \item The adjoint variable $p(t, \omega) \in C^1([0,T] \times \Omega; \mathbb{R}^n)$ as well, and $\partial_{\omega} p(t, \omega)$ satisfies the following first-order variational equation:
              \begin{align*}
                  \begin{cases}
                      \dot{z}(t, \omega) = -A(t, \omega; u(t))^{\top} z(t, \omega) - \partial_{\omega} A(t, \omega; u(t))^{\top} p(t, \omega)         \\
                      \hspace{6em} + 2 \alpha C^\top(C \partial_{\omega} x(t, \omega) - \partial_{\omega} \mathcal{F}(\omega)(t)), \quad t \in [0,T], \\
                      z(T, \omega) = 0.
                  \end{cases}
              \end{align*}
    \end{enumerate}
\end{lemma}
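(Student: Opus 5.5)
For each fixed $\omega$ the state equation \eqref{eq:ssm_dyn} is a linear ODE in $x$. Its coefficients $t\mapsto A(t,\omega;u(t))$ and $t\mapsto B(t,\omega;u(t))$ are continuous, because $u$ is continuous and $A,B$ are jointly continuous (Assumption~\ref{asp:main_asp}(5)). Standard linear ODE theory therefore gives a unique $C^1$ solution on $[0,T]$.

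\textbf{Bounds in part (1).} Grönwall applied to $|x(t)|\le |x_0(\omega)|+\int_0^t (M_A|x|+M_B)\,\dd s$ yields $|x(t,\omega)|\le (\|x_0\|+TM_B)e^{TM_A}=M_X$. Here $\|x_0\|$ is the sup of $|x_0|$ over $\Omega$, which is finite since $x_0$ is continuous and $\Omega$ is compact. Once $x$ is known, the adjoint equation \eqref{eq:adj_eqn_ps} is again linear with continuous coefficients and forcing $2\alpha C^\top(Cx-\mathcal F(\omega)(t))$. Its forcing is bounded by $2\alpha|C|(|C|M_X+M_{\Omega'})$, using Assumption~\ref{asp:main_asp}(2). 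A backward Grönwall estimate from $p(T)=0$ then gives $|p|\le M_P$.

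\textbf{Regularity in $\omega$ (parts (2) and (3)).} I would treat $\omega$ as a parameter in the Banach space $C([0,T];\mathbb R^d)$, working on the open set $\tilde\Omega$. Differentiability then follows from the theorem on differentiable dependence of ODE solutions on parameters.

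Concretely, fix $\omega$ and a perturbation $h$, and let $z$ solve \eqref{eq:var_eqn_x} with $z$ applied to $h$. Let $r(t)=x(t,\omega+h)-x(t,\omega)-z(t)[h]$ be the remainder. It satisfies
\[
\dot r = A(t,\omega;u)\,r + R(t,h),
\]
where $R$ collects the first-order Taylor remainders of $A$ and $B$ in $\omega$, plus the cross term $(A(t,\omega+h;u)-A(t,\omega;u))(x(t,\omega+h)-x(t,\omega))$. We need $\sup_t|R(t,h)|=o(\|h\|)$.

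\begin{itemize}
\item The Taylor remainders are handled by the mean value inequality, using that $\partial_\omega A$ and $\partial_\omega B$ are continuous.
\item The cross term requires Lipschitz continuity of $x$ in $\omega$. This follows from Grönwall together with the bounds on $\partial_\omega A$, $\partial_\omega B$ from Assumption~\ref{asp:main_asp}(7) and the Lipschitz bound on $x_0$.
\end{itemize}

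Grönwall applied to the equation for $r$ then gives $\|r\|=o(\|h\|)$, so $\partial_\omega x = z$.

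Continuity of $(t,\omega)\mapsto \partial_\omega x$ in operator norm comes from continuous dependence of the linear equation \eqref{eq:var_eqn_x} on its data. For joint $C^1$ regularity we additionally need $\partial_t x = A x + B$, which is continuous in $(t,\omega)$.

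The adjoint variable $p$ is handled identically. Its forcing involves $x$ and $\mathcal F$, both now known to be $C^1$ in $\omega$ (Assumption~\ref{asp:main_asp}(3)). This yields the stated variational equation with terminal condition $z(T)=0$.

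\textbf{Main obstacle.} The delicate step is uniformity: the $o(\|h\|)$ estimates for the Taylor remainders must hold uniformly in $t\in[0,T]$. Pointwise continuity of $\partial_\omega A$ is not enough. I would obtain uniformity from compactness. The set $\{(t,\omega+sh): t\in[0,T],\ s\in[0,1]\}$ for small $h$ lies in a neighbourhood of the compact set $[0,T]\times\{\omega\}$. Since $\partial_\omega A(\cdot,\cdot;\cdot)$ is jointly continuous on this set, it is uniformly continuous there, and the map $u(\cdot)$ ranges over the compact set $U$. This yields the required uniform smallness.
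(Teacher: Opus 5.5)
Your proposal is correct and follows the same route as the paper: Gr\"onwall for the bounds $M_X$ and $M_P$, then a linear equation for the remainder $x(t,\omega+h)-x(t,\omega)-z(t,\omega)[h]$ closed by Gr\"onwall, continuity of $\partial_\omega x$ from the variational equation, and the same argument repeated for $p$. The differences are only technical. You control the cross term with a Lipschitz bound in $\omega$, giving $O(\|h\|^2)$, where the paper uses continuity of $x$ in $\omega$ uniformly in $t$. Also, your ``main obstacle'' is unnecessary: only the time integral of the Taylor remainder divided by $\|h\|$ must vanish, and the paper gets this by dominated convergence from pointwise Fr\'echet differentiability and the bound on $\partial_\omega A$. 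Your compactness argument also works, provided it is phrased as uniform continuity near the compact set $[0,T]\times\{\omega\}\times U$ rather than on the non-compact neighbourhood.
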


We propose the following Algorithm \ref{alg:iter_alg} to approximate an optimal control for (\textbf{PS}), which produces a sequence of control functions $\{u^{(k)}\}$ by successively solving the associated forward and backward problems and updating the control according to the maximum condition.

\begin{algorithm}[h]\label{alg:iter_alg}
    \caption{Method of Successive Approximations}
    \KwIn{Initial continuous control $u^{(0)}$}
    \KwOut{Updated control sequence $\{u^{(k)}\}$}

    \For{$k = 0, 1, 2, \ldots$}{
        \textbf{Solve the forward problem for} $x^{(k)}$:
        \begin{align}\label{eq:for_prb}
            \begin{cases}
                \dot{x}^{(k)}(t, \omega) = A(t, \omega; u^{(k)}(t)) x^{(k)}(t, \omega) + B(t, \omega; u^{(k)}(t)),  \quad t \in [0,T], \\
                x^{(k)}(0, \omega) = x_0(\omega).
            \end{cases}
        \end{align}

        \textbf{Solve the backward problem for} $p^{(k)}$:
        \begin{align}\label{eq:back_prb}
            \begin{cases}
                \dot{p}^{(k)}(t, \omega) = - A(t, \omega; u^{(k)}(t))^\top p^{(k)}(t, \omega) + 2\alpha C^\top (Cx^{(k)}(t, \omega) - \mathcal{F}(\omega)(t)), \quad t \in [0,T], \\
                p^{(k)}(T, \omega) = 0.
            \end{cases}
        \end{align}

        \textbf{Update} $u^{(k+1)}$ \textbf{according to the maximum condition}:
        \begin{align}\label{eq:max_condi}
            H(x^{(k)}(t,\cdot), p^{(k)}(t,\cdot), u^{(k+1)}(t), t)
            = \max_{u \in U}
            H(x^{(k)}(t,\cdot), p^{(k)}(t,\cdot), u, t),
            \quad t \in [0, T],
        \end{align}
        where
        \begin{align}\label{eq:hamil_func}
             & H(x(t, \cdot), p(t, \cdot), u, t)                                                                                                                                                                               \\
             & \quad :=  \mathbb{E}_{\omega} \left[ p(t, \omega) \cdot \left( A(t, \omega; u) x(t, \omega) + B(t, \omega; u) \right) - \alpha \abs{C x(t, \omega) - \mathcal{F}(\omega)(t)}^2 - \beta \abs{u}^2 \right]. \nonumber
        \end{align}
    }
\end{algorithm}

Algorithm~\ref{alg:iter_alg} can be interpreted as a continuous-time analogue of iterative training procedures commonly used in machine learning. In particular, the state trajectory $x^{(k)}(t,\omega)$ corresponds to the forward pass through the model, obtained by solving the forward equation \eqref{eq:for_prb}, while the adjoint variable $p^{(k)}(t,\omega)$ plays a role analogous to the back-propagated error signal and is obtained by solving the backward equation \eqref{eq:back_prb}. The update step based on the maximum condition parallels a parameter update step in training, where the current control $u^{(k)}$ is adjusted using gradient-related information encoded in maximizing the Hamiltonian \eqref{eq:hamil_func}. From this perspective, the method of successive approximations can be viewed as a functional analogue of gradient-based optimization, with the key difference that updates are obtained by solving forward--backward systems rather than performing explicit gradient descent steps. Moreover, once the forward trajectory is known, the backward equation becomes linear in the adjoint variable and has a dynamical structure similar to the forward equation. As a result, both forward and backward solves can be carried out independently across input trajectories and implemented efficiently in parallel. 

\smallskip 

To ensure that this iterative process is well-posed, it is necessary to establish conditions under which its limit points satisfy the Pontryagin maximum principle. The following theorem formalizes this result by proving that, under Assumption \ref{asp:main_asp}, the sequence produced by Algorithm \ref{alg:iter_alg} is precompact in the uniform topology and that every uniform limit point fulfills the necessary optimality conditions of (\textbf{PS}) when the penalty parameter $\beta$ is large enough.

\begin{theorem}\label{thm:conv_max_prin}
    Define $C_X := (G_A M_X + G_B) e^{M_A T}$, $C_E := \frac{1}{2} C_X T (M_P G_A + \alpha \abs{C}^2 C_X T)$ and 
    \[
    \beta_0 := \frac{1}{2}  M_P(H_A M_X + H_B) +  C_E . 
    \]
    Under Assumption \ref{asp:main_asp}, if
    $\beta > \beta_0$, then Algorithm \ref{alg:iter_alg} generates a sequence of continuous controls $\{u^{(k)}\}$ and the sequences of state and adjoint variables $\{x^{(k)}\}$ and $\{p^{(k)}\}$ that correspond to $\{u^{(k)}\}$. Moreover, there exists a subsequence indexed by $\{k_j\}$ such that $u^{(k_j)} \to u^*$, $x^{(k_j)} \to x^*$, and $p^{(k_j)} \to p^*$ uniformly on $[0,T] \times \Omega$, and the triple $(x^*, p^*, u^*)$ satisfies the maximum condition \eqref{eq:max_cond_ps} in Pontryagin's maximum principle (Theorem \ref{thm:cont_max_prin_ps}) for (\textbf{PS}), where $x^*$ and $p^*$ are the state and adjoint variables that correspond to $u^*$.
\end{theorem}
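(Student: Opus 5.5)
The proof has four stages: well-posedness of the update, a sufficient-decrease inequality for $J$, equicontinuity of the controls with Arzel\`a--Ascoli, and passage to the limit in the maximum condition.

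\textbf{Well-posedness.} I would first show by induction that each $u^{(k)}$ is continuous and that the argmax in \eqref{eq:max_condi} is a single point. Assume $u^{(k)}$ is continuous. Lemma~\ref{lmm:a_priori} then gives $x^{(k)},p^{(k)}$ with $|x^{(k)}|\le M_X$ and $|p^{(k)}|\le M_P$. The map $u\mapsto H(x^{(k)}(t,\cdot),p^{(k)}(t,\cdot),u,t)$ is $C^2$, and its Hessian satisfies
\[
D_u^2 H \preceq \bigl(M_P(H_AM_X+H_B)-2\beta\bigr)I .
\]
Set $\kappa:=2\beta-M_P(H_AM_X+H_B)$. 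Since $\beta>\beta_0$, we have $\kappa>0$, so $H$ is $\kappa$-strongly concave in $u$ on the convex compact set $U$. Hence the maximizer $u^{(k+1)}(t)$ exists and is unique. For continuity in $t$ I would use the standard stability estimate for strongly concave maximization: for $t,s\in[0,T]$,
\[
\kappa\,|u^{(k+1)}(t)-u^{(k+1)}(s)|\le \sup_{u\in U}\bigl|\nabla_uH(\cdot,u,t)-\nabla_uH(\cdot,u,s)\bigr| .
\]
This follows by adding the two variational inequalities at $t$ and $s$.

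\textbf{Descent.} The central estimate is
\[
J(u^{(k+1)})\le J(u^{(k)})-(\beta-\beta_0)\,\|u^{(k+1)}-u^{(k)}\|_{L^2}^2 .
\]
For this I would prove the MSA error identity: for continuous $u,v$,
\[
J(v)-J(u)=-\int_0^T\bigl[H(x^u,p^u,v,t)-H(x^u,p^u,u,t)\bigr]dt+\mathcal E ,
\]
with $|\mathcal E|\le C_E\|v-u\|_{L^2}^2$. To derive it, write $\delta x=x^v-x^u$ and pair the adjoint equation \eqref{eq:back_prb} with $\delta x$, using $p^u(T)=0$ and $\delta x(0)=0$. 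The remainder consists of two pieces. One is $\alpha|C\delta x|^2$. The other is $p^u\cdot(A(v)-A(u))\delta x$.

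By Gronwall applied to the equation for $\delta x$, $|\delta x(t)|\le C_X\int_0^t|v-u|$. Cauchy--Schwarz then turns $\int_0^t|v-u|$ into $\sqrt{T}\,\|v-u\|_{L^2}$. Bounding both remainder pieces this way should reproduce the form of $C_E$; I expect this bookkeeping to be the most delicate step. Strong concavity at the maximizer $u^{(k+1)}(t)$ gives
\[
H(\cdot,u^{(k+1)})-H(\cdot,u^{(k)})\ge\tfrac{\kappa}{2}|u^{(k+1)}-u^{(k)}|^2 .
\]
Since $\tfrac{\kappa}{2}-C_E=\beta-\beta_0$, combining this with the error identity yields the descent inequality. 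Because $J\ge0$, it follows that $\sum_k\|u^{(k+1)}-u^{(k)}\|_{L^2}^2<\infty$.

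\textbf{Compactness.} Next I need equicontinuity of $\{u^{(k)}\}$ uniformly in $k$. Here $\nabla_uH$ is an expectation of $p\cdot(D_uA\,x+D_uB)-2\beta u$. By Assumption~\ref{asp:main_asp}(6), $D_uA$ and $D_uB$ have the $t$-moduli $\theta_A,\theta_B$. The functions $x^{(k)},p^{(k)}$ are Lipschitz in $t$ with constants depending only on $M_A,M_B,M_X,M_P,\alpha,|C|,M_{\Omega'}$. The stability estimate therefore gives a modulus of continuity for $u^{(k+1)}$ that is independent of $k$. Since $U$ is compact, Arzel\`a--Ascoli yields a subsequence $u^{(k_j)}\to u^*$ uniformly. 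Gronwall estimates for the forward and backward equations, taken with sup over $\omega$ (all bounds are uniform in $\omega$), then give $x^{(k_j)}\to x^*$ and $p^{(k_j)}\to p^*$ uniformly on $[0,T]\times\Omega$. Here $x^*$ and $p^*$ solve the equations driven by $u^*$.

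\textbf{Passage to the limit.} The sequence $u^{(k_j+1)}-u^{(k_j)}$ tends to $0$ in $L^2$ and is uniformly equicontinuous, so it tends to $0$ uniformly; hence $u^{(k_j+1)}\to u^*$ uniformly. For every $u\in U$ and every $t$,
\[
H(x^{(k_j)},p^{(k_j)},u^{(k_j+1)}(t),t)\ge H(x^{(k_j)},p^{(k_j)},u,t).
\]
By dominated convergence and the uniform convergence established above, both sides converge as $j\to\infty$. This gives \eqref{eq:max_cond_ps} for the triple $(x^*,p^*,u^*)$.
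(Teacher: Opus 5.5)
Your overall plan matches the paper's: strong concavity, the MSA error identity, descent, equicontinuity from the variational inequality, Arzel\`a--Ascoli, and passage to the limit. There is, however, a constant-level slip in the descent step that matters for the stated threshold.

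If you bound $\int_0^t|v-u|\,\dd s\le\sqrt{T}\,\|v-u\|_{L^2}$ uniformly in $t$, you get the following:
\begin{itemize}
\item the cross term $\int_0^T\mathbb{E}_\omega[p^u\cdot(A(v)-A(u))\delta x]\,\dd t$ is at most $M_PG_AC_XT\,\|v-u\|_{L^2}^2$;
\item the quadratic term $\alpha\int_0^T\mathbb{E}_\omega|C\delta x|^2\,\dd t$ is at most $\alpha|C|^2C_X^2T^2\,\|v-u\|_{L^2}^2$.
\end{itemize}
Together this is $2C_E$, not $C_E$. Descent would then only follow for $\beta>\tfrac12M_P(H_AM_X+H_B)+2C_E$, not for every $\beta>\beta_0$.

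To recover $C_E$ exactly you must use the triangular structure, with $f=|v-u|$:
\begin{itemize}
\item for the cross term, $\int_0^Tf(t)\int_0^tf(s)\,\dd s\,\dd t=\tfrac12\bigl(\int_0^Tf\bigr)^2\le\tfrac T2\|f\|_{L^2}^2$;
\item for the quadratic term, $\int_0^tf\le\sqrt{t}\,\|f\|_{L^2}$ together with $\int_0^Tt\,\dd t=T^2/2$.
\end{itemize}
This is exactly how the paper obtains $C_E$.

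Where you genuinely differ from the paper is the convergence of states and adjoints.

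The paper's route is as follows. It bounds $\partial_t$ and the Fr\'echet derivatives $\partial_\omega x^{(k_j)}$, $\partial_\omega p^{(k_j)}$ via the variational equations of Lemma~\ref{lmm:a_priori}(2)--(3). It then applies Arzel\`a--Ascoli on $[0,T]\times\Omega$, extracting further subsequences. Only afterwards does it identify the limits as the state and adjoint of $u^*$, by passing to the limit in the integral equations.

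Your route instead defines $x^*,p^*$ as the solutions driven by the continuous limit $u^*$ and compares directly. The Gr\"onwall argument behind \eqref{eq:xi_bound_rev} gives $\sup_{t,\omega}|x^{(k_j)}-x^*|\le C_X\|u^{(k_j)}-u^*\|_{L^1}$. A backward Gr\"onwall estimate for $p^{(k_j)}-p^*$, with sources $G_AM_P|u^{(k_j)}-u^*|$ and $2\alpha|C|^2|x^{(k_j)}-x^*|$, gives the analogous uniform bound.

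Your route is shorter and needs no further extraction. It also identifies the limit automatically, and it does not need differentiability in $\omega$ (Assumption~\ref{asp:main_asp}(3),(7)), parts (2)--(3) of Lemma~\ref{lmm:a_priori}, or the convexity of $\Omega$ used for equicontinuity in $\omega$. The paper pays for those hypotheses without gaining anything extra for this particular theorem.

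Your smaller deviations are both harmless, and arguably cleaner:
\begin{itemize}
\item continuity of $u^{(k+1)}$ via the stability estimate rather than Berge's theorem;
\item passing to the limit in $H(\cdot,u^{(k_j+1)}(t),t)\ge H(\cdot,u,t)$ for each fixed $u$, rather than invoking continuity of the max-map.
\end{itemize}
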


\begin{proof} We divide the proof into several steps. 

    \noindent 
    \textbf{1. Well-posedness of the Algorithm~\ref{alg:iter_alg}.} We prove by induction that for each $k \in \mathbb{N}$, there exist unique solutions $x^{(k)}$ and $p^{(k)}$ to the forward and backward problems \eqref{eq:for_prb} and \eqref{eq:back_prb}. The initial control $u^{(0)}(\cdot)$ is continuous by selection. By Lemma \ref{lmm:a_priori}, corresponding to $u^{(0)}(\cdot)$, there exist unique solutions $\{x^{(0)}(\cdot, \omega)\}_{\omega \in \Omega}$ and $\{p^{(0)}(\cdot, \omega)\}_{\omega \in \Omega}$ to the forward and backward problems \eqref{eq:for_prb}, \eqref{eq:back_prb} respectively. Moreover, the state and adjoint variables $\{x^{(0)}(\cdot, \omega)\}_{\omega \in \Omega}$ and $\{p^{(0)}(\cdot, \omega)\}_{\omega \in \Omega}$ satisfy the boundedness estimate in Lemma \ref{lmm:a_priori}, that is,
    \begin{align*}
        \abs{x^{(0)}(t, \omega)} \leq M_X, \quad \abs{p^{(0)}(t, \omega)} \leq M_P, \quad \forall (t, \omega) \in [0,T] \times \Omega.
    \end{align*}
    We now show that $u^{(1)}(\cdot)$ is continuous. Observe that the function
    \begin{align*}
        \mathcal{H}^{(0)}(t,u) := H(x^{(0)}(t, \cdot), p^{(0)}(t, \cdot), u, t)
    \end{align*}
    is continuous on $[0,T] \times U$. Moreover, $\mathcal{H}^{(0)}(t,u)$ is twice differentiable with respect to $u$. For each $(t,u) \in [0,T] \times U$, the Hessian of $\mathcal{H}^{(0)}$ with respect to $u$ is
    \begin{align*}
        \nabla_u^2 \mathcal{H}^{(0)}(t,u) = \mathbb{E}_{\omega} \left[ p^{(0)}(t, \omega) \cdot\left( D_u^2 A(t, \omega;u) x^{(0)}(t, \omega) + D_u^2 B(t, \omega;u) \right) \right] - 2 \beta \mathbb{I}.
    \end{align*}
    We compute the quadratic form of $\nabla_u^2 \mathcal{H}^{(0)}(t, u)$ along the direction $v \in \mathbb{R}^m$:
    \begin{align*}
        \nabla_u^2 \mathcal{H}^{(0)}(t, u)[v,v] = \mathbb{E}_{\omega} \left[ p^{(0)}(t, \omega) \cdot\left( D_u^2 A(t, \omega;u)[v,v] x^{(0)}(t, \omega) + D_u^2 B(t, \omega;u)[v,v] \right) \right] - 2 \beta \abs{v}^2.
    \end{align*}
    By Assumption~\ref{asp:main_asp} (6), we have 
    \begin{align*}
        \nabla^2_u \mathcal{H}^{(0)}(t, u)[v,v] \leq -\lambda \abs{v}^2, \qquad \lambda:= 2\beta - M_P(H_A M_X + H_B).
    \end{align*}
    Since $\beta > \beta_0 > \frac{1}{2} M_P( H_A M_X + H_B)$, we have $\lambda > 0$, that is, $\mathcal{H}^{(0)}(t, u)$ is $\lambda$-strongly concave in $u$. 
    Recall that
    \begin{align*}
        u^{(1)}(t) := \argmax_{u \in U} \mathcal{H}^{(0)}(t,u) \quad \forall t \in [0,T],
    \end{align*}
    and $U$ is a compact and convex subset of $\mathbb{R}^m$. By Berge's maximum theorem, $u^{(1)}$ is continuous on $[0,T]$. Proceeding inductively, we obtain a continuous control sequence $\{u^{(k)}\}$ and the corresponding sequences of state and adjoint variables $\{x^{(k)}\}$ and $\{p^{(k)}\}$, which satisfy $\abs{x^{(k)}(t,\omega)} \leq M_X$ and $\abs{p^{(k)}(t,\omega)} \leq M_P$ for all $(t,\omega) \in [0,T] \times \Omega$ and $k \in \mathbb{N}$. Moreover, $$\mathcal{H}^{(k)}(t, u) := H(x^{(k)}(t, \cdot), p^{(k)}(t, \cdot), u, t)$$ is uniformly $\lambda$-strongly concave in $u$. 

    \smallskip \noindent 
    \textbf{2. $L^2$-convergence $\Vert u^{(k+1)} - u^{(k)} \Vert_{L^2} \to 0$.} Define 
    \[
    \delta u^{(k)} := u^{(k+1)} - u^{(k)}, \qquad \xi^{(k)} := x^{(k+1)} - x^{(k)}
    \]
    and 
    \[
    A^{(k)}(t, \omega) = A(t, \omega; u^{(k)}(t)), \quad B^{(k)}(t, \omega) = B(t, \omega; u^{(k)}(t)), \qquad k \in \mathbb{N}. 
    \]
    By Assumption~\ref{asp:main_asp} (6), for every $\omega \in \Omega$, 
    \begin{align}\label{eq:ctrl_AB}
        \abs{(A^{(k+1)} - A^{(k)})(t, \omega)} \leq G_A \abs{\delta u^{(k)}(t)}, \quad \abs{(B^{(k+1)} - B^{(k)})(t, \omega)} \leq G_B \abs{\delta u^{(k)}(t)}. 
    \end{align}
    Since $\xi^{(k)}$ satisfies the difference of the forward equations and $\xi^{(k)}(0,\omega) = 0$,
    \begin{align*}
        \xi^{(k)}(t, \omega) & = \int_{0}^{t} \big( A^{(k+1)} x^{(k+1)} - A^{(k)} x^{(k)} + B^{(k+1)} - B^{(k)} \big) (s, \omega) \,\dd s  \\
        & = \int_{0}^{t} \big( (A^{(k+1)} - A^{(k)})x^{(k+1)} + A^{(k)} \xi^{(k)} + B^{(k+1)} - B^{(k)} \big) (s, \omega) \,\dd s,
    \end{align*}
    considering the uniform boundedness of $\{A^{(k)}\}$ and $\{x^{(k)}\}$, we have 
    \begin{align*}
        \abs{\xi^{(k)}(t, \omega)} \leq \int_{0}^{t} (G_A M_X + G_B) \abs{\delta u^{(k)}(s)} + M_A \abs{\xi^{(k)}(s, \omega)} \,\dd s.
    \end{align*}
    By Gr\"onwall's inequality, 
    \begin{align}\label{eq:xi_bound_rev}
        \abs{\xi^{(k)}(t,\omega)} \leq C_X \int_0^t \abs{\delta u^{(k)}(s)} \dd s, \quad \text{where } C_X =(G_A M_X + G_B) e^{M_A T}.
    \end{align}
    We now compute $J(u^{(k+1)}) - J(u^{(k)})$:
    \begin{align*}
        J(u^{(k+1)}) - J(u^{(k)}) = {} & \int_0^T \mathbb{E}_\omega \left[ 2\alpha (C x^{(k)}(t, \omega) - \mathcal{F}(\omega)(t))^\top C \xi^{(k)}(t, \omega) + \alpha \abs{C\xi^{(k)}(t, \omega)}^2 \right] \dd t  \\
        & + \beta \int_0^T \big(\abs{u^{(k+1)}(t)}^2 - \abs{u^{(k)}(t)}^2\big) \dd t.
    \end{align*}
    Using the adjoint equation \eqref{eq:back_prb}, the terminal condition $p^{(k)}(T) = 0$, $\xi^{(k)}(0) = 0$, and integration by parts, the first-order term becomes
    \begin{align*}
            & \int_0^T \mathbb{E}_\omega \left[ 2\alpha (Cx^{(k)}(t, \omega) - \mathcal{F}(\omega)(t))^\top C \xi^{(k)}(t, \omega) \right] \dd t   \\
            & \quad = - \int_0^T \mathbb{E}_\omega \left[ p^{(k)} \cdot \big( (A^{(k+1)} - A^{(k)}) x^{(k+1)} + (B^{(k+1)} - B^{(k)}) \big) (t, \omega) \right] \dd t.
    \end{align*}
    Using 
    \begin{align*}
            & \mathbb{E}_\omega \left[p^{(k)} \cdot \big( (A^{(k+1)} - A^{(k)}) x^{(k)} + (B^{(k+1)} - B^{(k)}) \big) (t, \omega) \right]  \\
            & \quad = \mathcal{H}^{(k)}(t, u^{(k+1)}(t)) - \mathcal{H}^{(k)}(t, u^{(k)}(t)) + \beta(\abs{u^{(k+1)}(t)}^2 - \abs{u^{(k)}(t)}^2),
    \end{align*}
    and $x^{(k+1)} = x^{(k)} + \xi^{(k)}$, the $\beta$-terms cancel:
    \begin{align*}
        J(u^{(k+1)}) - J(u^{(k)}) = {} & {-} \int_0^T \left( \mathcal{H}^{(k)}(t, u^{(k+1)}(t)) - \mathcal{H}^{(k)}(t, u^{(k)}(t)) \right) \dd t   \\
        & {-} \underbrace{\int_0^T \mathbb{E}_\omega \left[p^{(k)} \cdot (A^{(k+1)} - A^{(k)}) \xi^{(k)}(t, \omega) \right] \dd t}_{E_1}  \\
        & {+} \underbrace{\alpha \int_0^T \mathbb{E}_\omega \abs{C\xi^{(k)}(t, \omega)}^2 \dd t}_{E_2}.
    \end{align*}
    By the $\lambda$-strong concavity of $\mathcal{H}^{(k)}(t, u)$ in $u$ (with $\lambda = 2\beta - M_P(H_A M_X + H_B)$), for each $t \in [0,T]$,
    \begin{align*}
        \mathcal{H}^{(k)}(t, u^{(k+1)}(t)) - \mathcal{H}^{(k)}(t, u^{(k)}(t)) \geq \langle \nabla_u \mathcal{H}^{(k)}(t, u^{(k+1)}(t)), \delta u^{(k)}(t) \rangle + \frac{\lambda}{2} \abs{\delta u^{(k)}(t)}^2.
    \end{align*}
    Since $u^{(k+1)}(t) = \argmax_{u \in U} \mathcal{H}^{(k)}(t, u)$ and $u^{(k)}(t) \in U$, the first-order optimality condition gives $\langle \nabla_u \mathcal{H}^{(k)}(t, u^{(k+1)}(t)), u^{(k)}(t) - u^{(k+1)}(t) \rangle \leq 0$, so the gradient term is non-negative. Integrating over $[0,T]$,
    \begin{align*}
        \int_0^T \left( \mathcal{H}^{(k)}(t, u^{(k+1)}(t)) - \mathcal{H}^{(k)}(t, u^{(k)}(t)) \right) \dd t \geq \frac{\lambda}{2} \Vert \delta u^{(k)} \Vert_{L^2}^2.
    \end{align*}
    For $E_1$, by the uniform bound of $\{p^{(k)}\}$, \eqref{eq:ctrl_AB}, and \eqref{eq:xi_bound_rev}:
    \begin{align*}
        \abs{E_1} \leq M_P G_A C_X \int_0^T \abs{\delta u^{(k)}(t)} \int_0^t \abs{\delta u^{(k)}(s)} \dd s \, \dd t.
    \end{align*}
    Setting $F(t) = \int_0^t \abs{\delta u^{(k)}(s)} \dd s$, the fundamental theorem of calculus gives $\int_0^T F'(t) F(t) \dd t = \frac{1}{2} F(T)^2$. By the Cauchy-Schwarz inequality, $F(T)^2 \leq T \Vert \delta u^{(k)} \Vert_{L^2}^2$, hence
    \begin{align*}
        \abs{E_1} \leq \frac{1}{2} M_P G_A C_X T \Vert \delta u^{(k)} \Vert_{L^2}^2.
    \end{align*}
    For $E_2$, 
    \[
    E_2 \leq \alpha \abs{C}^2 C_X^2 \frac{T^2}{2} \Vert \delta u^{(k)} \Vert_{L^2}^2. 
    \]
    Thus, we get
    \begin{equation} \label{eq:J_mono}
        J(u^{(k+1)}) - J(u^{(k)}) \leq -\left(\frac{\lambda}{2} - C_E\right) \Vert \delta u^{(k)} \Vert_{L^2}^2, \quad C_E = \frac{1}{2} M_P G_A C_X T + \frac{1}{2} \alpha \abs{C}^2 C_X^2 T^2
    \end{equation}
    By assumption, 
    \begin{align*}
        \beta > \beta_0 = \frac{1}{2} M_P(H_A M_X + H_B) + C_E,
    \end{align*}
    then $c := \lambda/2 - C_E > 0$. Since $J \geq 0$, summing \eqref{eq:J_mono} gives 
    \[
    \sum_{k=0}^{\infty} \Vert \delta u^{(k)} \Vert_{L^2}^2 \leq J(u^{(0)})/c < \infty, 
    \]
    therefore, $\Vert \delta u^{(k)} \Vert_{L^2} \to 0$.

    \smallskip \noindent 
    \textbf{3. Uniform convergence $\Vert u^{(k+1)} - u^{(k)} \Vert \to 0$.} We begin by showing that $\nabla_u \mathcal{H}^{(k)}(t, u)$ is equicontinuous in $t$, that is, there exists a modulus of continuity $\theta:[0,\infty) \to [0,\infty)$ such that 
    \begin{equation}\label{eq:grad_H_lip}
        \sup_{\substack{k \in \mathbb{N}\\ u \in U}} \abs{\nabla_u \mathcal{H}^{(k)}(t, u) - \nabla_u \mathcal{H}^{(k)}(s, u)} \leq \theta(\abs{t-s}), \qquad \forall t, s\in [0,T]. 
    \end{equation}
    Recall that the gradient of $\mathcal{H}^{(k)}(t,u)$ with respect to $u$ is
    \begin{align*}
        \nabla_u \mathcal{H}^{(k)}(t,u) = \mathbb{E}_{\omega} \left[ p^{(k)}(t, \omega) \cdot\left( D_u A(t, \omega;u) x^{(k)}(t, \omega) + D_u B(t, \omega;u) \right) \right] - 2 \beta u.
    \end{align*}
    By the forward and backward equations \eqref{eq:for_prb}--\eqref{eq:back_prb}, we know that both $x^{(k)}(t, \omega)$ and $p^{(k)}(t, \omega)$ are uniformly Lipschitz in $t$, that is, for any $\omega \in \Omega$, $k \in \mathbb{N}$, and $t,s \in [0,T]$,
    \[
    \abs{x^{(k)}(t, \omega) - x^{(k)}(s, \omega)} \leq L_X\abs{t - s}, \quad \abs{p^{(k)}(t, \omega) - p^{(k)}(s, \omega)} \leq L_P\abs{t - s},
    \]
    where $L_X = M_A M_X + M_B$ and $L_P = M_A M_P + 2\alpha \abs{C}(\abs{C} M_X + M_{\Omega'})$. Now we fix $k \in \mathbb{N}$ and $u \in U$. For any $t, s \in [0,T]$, 
    \begin{align*}
        & \nabla_u \mathcal{H}^{(k)}(t,u) - \nabla_u \mathcal{H}^{(k)}(s,u) = \mathbb{E}_{\omega} \bigg[ \left( p^{(k)}(t, \omega) - p^{(k)}(s, \omega)\right) \cdot D_u A (t, \omega;u) x^{(k)}(t, \omega) \,+  \\
        & \qquad \qquad + p^{(k)}(s, \omega) \cdot D_u A(t, \omega;u) \left( x^{(k)}(t, \omega) - x^{(k)}(s, \omega) \right) +  \\
        & \qquad \qquad + p^{(k)}(s, \omega)  \cdot( D_u A(t, \omega;u) - D_u A(s, \omega;u)) x^{(k)}(s, \omega)\,+  \\
        & +  \left( p^{(k)}(t, \omega) - p^{(k)}(s, \omega)\right) \cdot D_u B (t, \omega;u) + p^{(k)}(s, \omega)  \cdot( D_u B(t, \omega;u) - D_u B(s, \omega;u)) \bigg].
    \end{align*}
    By the uniform boundedness and equicontinuity of $D_u A(\cdot, \omega;u)$, $D_u B(\cdot,\omega;u)$, $x^{(k)}(\cdot, \omega)$, $p^{(k)}(\cdot,\omega)$,
    \begin{align*}
        \abs{\nabla_u \mathcal{H}^{(k)}(t,u) - \nabla_u \mathcal{H}^{(k)}(s,u)} & \leq L_P\abs{t -s} G_A M_X + M_P G_A L_X \abs{t - s} +  \\
        & + M_P \theta_A(\abs{t-s}) M_X+ L_P\abs{t-s} G_B + M_P \theta_B(\abs{t-s}).
    \end{align*}
    Then \eqref{eq:grad_H_lip} is proved by defining $\theta(\abs{t-s})$ as the right-hand side of the above inequality, which does not depend on $k$ and $u$. 
    
    Next, we show that $\{u^{(k)}\}_{k \in \mathbb{N}}$ is equicontinuous on $[0,T]$. Take $t, s \in [0,T]$. Since 
    \[
    u^{(k+1)}(t) = \argmax_{u \in U} \mathcal{H}^{(k)}(t, u),
    \]
    the first-order optimality condition gives
    \[
    \langle \nabla_u \mathcal{H}^{(k)}(t, u^{(k+1)}(t)), v - u^{(k+1)}(t) \rangle \leq 0, \quad \forall v \in U. 
    \]
    Taking $v = u^{(k+1)}(s)$, we have 
    \begin{equation}\label{eq:1st_opt_t}
        \langle \nabla_u \mathcal{H}^{(k)}(t, u^{(k+1)}(t)), u^{(k+1)}(t) - u^{(k+1)}(s) \rangle \geq 0. 
    \end{equation}
    Interchanging $s$ and $t$ in \eqref{eq:1st_opt_t} gives 
    \begin{equation}\label{eq:1st_opt_s}
        - \langle \nabla_u \mathcal{H}^{(k)}(s, u^{(k+1)}(s)), u^{(k+1)}(t) - u^{(k+1)}(s) \rangle \geq 0.
    \end{equation}
    Adding \eqref{eq:1st_opt_t} with \eqref{eq:1st_opt_s}, we obtain 
    \[
    \langle \nabla_u \mathcal{H}^{(k)}(t, u^{(k+1)}(t)) - \nabla_u \mathcal{H}^{(k)}(s, u^{(k+1)}(s)), u^{(k+1)}(t) - u^{(k+1)}(s) \rangle \geq 0. 
    \]
    Rearranging the above inequality,
    \begin{align}\label{eq:1st_opt_tsd}
        & - \langle \nabla_u \mathcal{H}^{(k)}(t, u^{(k+1)}(t)) - \nabla_u \mathcal{H}^{(k)}(t, u^{(k+1)}(s)), u^{(k+1)}(t) - u^{(k+1)}(s) \rangle  \\
        & \qquad \leq \langle \nabla_u \mathcal{H}^{(k)}(t, u^{(k+1)}(s)) - \nabla_u \mathcal{H}^{(k)}(s, u^{(k+1)}(s)), u^{(k+1)}(t) - u^{(k+1)}(s) \rangle. \nonumber
    \end{align}
    By the uniform $\lambda$-strong concavity of  $\mathcal{H}^{(k)}(t, u)$, the left-hand side of \eqref{eq:1st_opt_tsd} becomes
    \begin{align*}
        & - \langle \nabla_u \mathcal{H}^{(k)}(t, u^{(k+1)}(t)) - \nabla_u \mathcal{H}^{(k)}(t, u^{(k+1)}(s)), u^{(k+1)}(t) - u^{(k+1)}(s) \rangle  \\
        & \qquad \geq \lambda \abs{u^{(k+1)}(t) - u^{(k+1)}(s)}^2.
    \end{align*}
    By \eqref{eq:grad_H_lip} and the Cauchy-Schwarz inequality, the right-hand side of \eqref{eq:1st_opt_tsd} becomes
    \begin{align*}
        & \langle \nabla_u \mathcal{H}^{(k)}(t, u^{(k+1)}(s)) - \nabla_u \mathcal{H}^{(k)}(s, u^{(k+1)}(s)), u^{(k+1)}(t) - u^{(k+1)}(s) \rangle  \\
        & \qquad \leq \theta(\abs{t -s})\, \abs{u^{(k+1)}(t) - u^{(k+1)}(s)}.
    \end{align*}
    Combining the above inequalities, we reach 
    \[
    \lambda \abs{u^{(k+1)}(t) - u^{(k+1)}(s)} \leq \theta(\abs{t -s}). 
    \]
    As $\lambda > 0$, this implies $\{u^{(k+1)}\}_{k \in \mathbb{N}}$ is equicontinuous. Since the first element $u^{(0)}$ is uniformly continuous on $[0,T]$ by the Heine-Cantor theorem, the entire sequence $\{u^{(k)}\}_{k \in \mathbb{N}}$ is equicontinuous. 

    Now we prove by contradiction that $\Vert \delta u^{(k)} \Vert \to 0$. Suppose there exists $\epsilon_0 > 0$, a subsequence $\{\delta u^{(k_j)}\}$ and $\{t_j\} \subseteq [0,T]$ such that 
    \begin{align*}
        \abs{\delta u^{(k_j)}(t_j)} \geq \epsilon_0, \qquad \forall j \in \mathbb{N}.
    \end{align*}
    The equicontinuity of $\{u^{(k)}\}$ implies the equicontinuity of $\{\delta u^{(k)}\}$, thus there exists $\delta > 0$ such that for all $\abs{t - s} < \delta$,
    \begin{align*}
        \abs{ \delta u^{(k)}(t) - \delta u^{(k)}(s) } < \epsilon_0/2, \qquad \forall k \in \mathbb{N}.
    \end{align*}
    Hence, if $\abs{t - t_j} < \delta$, then 
    \begin{align*}
        \abs{ \delta u^{(k_j)}(t)} \geq \abs{\delta u^{(k_j)}(t_j)} - \abs{\delta u^{(k_j)}(t) - \delta u^{(k_j)}(t_j)} \geq \epsilon_0 - \epsilon_0/2 = \epsilon_0/2,
    \end{align*}
    that is to say, on the interval
    \[
    I_j := [0,T] \cap (t_j - \delta, t_j+ \delta),
    \]
    we have $\abs{ \delta u^{(k_j)}(t)} \geq \epsilon_0/2$. Note that $\abs{I_j} \geq \delta$, then 
    \begin{align*}
        \Vert \delta u^{(k_j)} \Vert_{L^2}^2 = \int_{0}^{T} \abs{\delta u^{(k_j)}(t)}^2 \dd t \geq \int_{I_j} \abs{\delta u^{(k_j)}(t)}^2 \dd t \geq \delta (\epsilon_0/2)^2 > 0, \qquad \forall j \in \mathbb{N},
    \end{align*}
    a contradiction with $\Vert \delta u^{(k)} \Vert_{L^2} \to 0$. Therefore, $\Vert \delta u^{(k)} \Vert \to 0$. 

    \smallskip \noindent 
    \textbf{4. Extraction of convergent subsequences.} Since $U$ is compact, which implies that $\{u^{(k)}\}_{k \in \mathbb{N}}$ is uniformly bounded on $[0,T]$, by the equicontinuity of $\{u^{(k)}\}_{k \in \mathbb{N}}$ and the Arzel\`a-Ascoli theorem, there exists a subsequence $\{u^{(k_j)}\}$ of $\{u^{(k)}\}$ that converges uniformly to a continuous  limit $u^*$ on $[0,T]$. Since $\Vert u^{(k_j+1)} - u^{(k_j)}\Vert \to 0$ from Step 3, we also have that $\{u^{(k_j+1)}\}$ converges uniformly to $u^*$.

    Next we show that the sequence $\{x^{(k_j)}\}$ admits a further subsequence that converges to some $x^*$ in $C([0,T]\times\Omega;\mathbb{R}^n)$. Previously, we have shown that $\{x^{(k_j)}\}$ is uniformly bounded by $M_X$. By the forward equation \eqref{eq:for_prb}, $\{\partial_t x^{(k_j)}\}$ is also uniformly bounded. Further, by Lemma \ref{lmm:a_priori} (2), each $x^{(k_j)}$ is continuously Fr\'echet differentiable with respect to $\omega$, and $\partial_{\omega} x^{(k_j)}(t, \omega)$ satisfies the first-order variational equation
    \begin{align}\label{eq:var_eqn_x_k}
        \begin{cases}
            \dot{z}(t, \omega) = A(t, \omega; u^{(k_j)}(t)) z(t, \omega) + \partial_{\omega} A(t, \omega; u^{(k_j)}(t)) x^{(k_j)}(t, \omega) + \partial_{\omega} B(t, \omega; u^{(k_j)}(t)), \\
            z(0, \omega) = \partial_{\omega} x_0(\omega).
        \end{cases}
    \end{align}
    By Assumption~\ref{asp:main_asp} (7), the functions $(t, \omega) \mapsto \partial_{\omega} A(t, \omega; u^{(k_j)}(t))$ and $(t, \omega) \mapsto \partial_{\omega} B(t, \omega; u^{(k_j)}(t))$ are uniformly bounded. By applying Gr\"onwall's inequality for the variational equation \eqref{eq:var_eqn_x_k}, we see that $\{\partial_{\omega} x^{(k_j)}\}$ is uniformly bounded. Therefore, $\{x^{(k_j)}\}$ is equicontinuous. By the Arzelà--Ascoli theorem, there exists a further subsequence $\{x^{(k_{j_\ell})}\}$ of $\{x^{(k_j)}\}$ and a function $x^* \in C([0,T]\times\Omega;\mathbb{R}^n)$ such that
    \[
    x^{(k_{j_\ell})} \to x^*
    \quad \text{uniformly on } [0,T]\times\Omega .
    \]
    For notational simplicity, we relabel the subsequence $\{x^{(k_{j_\ell})}\}$ as $\{x^{(k_j)}\}$.

    Similarly, $\{p^{(k_j)}\}$ is uniformly bounded by $M_P$, and by the backward equation \eqref{eq:back_prb}, $\{\partial_t p^{(k_j)}\}$ is also uniformly bounded. By Lemma \ref{lmm:a_priori} (3), each $p^{(k_j)}$ is continuously Fr\'echet differentiable with respect to $\omega$, and $\partial_{\omega} p^{(k_j)}(t, \omega)$ satisfies the first-order variational equation
    \begin{align}\label{eq:var_eqn_p_k}
        \begin{cases}
            \dot{z}(t, \omega) = -A(t, \omega; u^{(k_j)}(t))^{\top} z(t, \omega) - \partial_{\omega} A(t, \omega; u^{(k_j)}(t))^{\top} p^{(k_j)}(t, \omega) \\
            \hspace{6em} + 2 \alpha C^\top(C \partial_{\omega} x^{(k_j)}(t, \omega) - \partial_{\omega} \mathcal{F}(\omega)(t)),                            \\
            z(T, \omega) = 0.
        \end{cases}
    \end{align}
    Invoking Gr\"onwall's inequality for \eqref{eq:var_eqn_p_k}, we know that $\{\partial_{\omega} p^{(k_j)}\}$ is uniformly bounded. Again, using the Arzel\`a-Ascoli theorem, we may assert that, by extracting a subsequence, $p^{(k_j)}$ converges to some $p^* \in C([0,T] \times \Omega; \mathbb{R}^n)$ uniformly.

    \smallskip \noindent 
    \textbf{5. Passing limit in the maximum condition and forward-backward system.} We show that $(x^*, p^*)$ solves the forward-backward system corresponding to $u^*$ and the triple $(x^*, p^*, u^*)$ satisfies the maximum condition \eqref{eq:max_cond_ps} in Theorem \ref{thm:cont_max_prin_ps}. It is straightforward to show by definition that for any fixed $t \in [0,T]$, the following map
    \begin{align*}
        C(\Omega; \mathbb{R}^n) \times C(\Omega; \mathbb{R}^n) \times \mathbb{R}^m & \to \mathbb{R}                       \\
        (x(\cdot), p(\cdot), u)                                                    & \mapsto H(x(\cdot), p(\cdot), u, t)
    \end{align*}
    is continuous, hence we can take the limit $j \to \infty$ on the left-hand side of \eqref{eq:max_condi} and get
    \begin{align*}
        \lim_{j \to \infty} H(x^{(k_j)}(t, \cdot), p^{(k_j)}(t, \cdot), u^{(k_j+1)}(t), t) = H(x^*(t, \cdot), p^*(t, \cdot), u^*(t), t), \quad \forall t \in [0,T].
    \end{align*}
    Since $U$ is a compact and convex subset of $\mathbb{R}^m$, the map
    \begin{align*}
        C(\Omega; \mathbb{R}^n) \times C(\Omega; \mathbb{R}^n) & \to \mathbb{R}                                     \\
        (x(\cdot), p(\cdot) )                                  & \mapsto\max_{u \in U} H(x(\cdot), p(\cdot), u, t)
    \end{align*}
    is also continuous for any fixed $t \in [0,T]$. We can then take the limit $j \to \infty$ on the right-hand side of \eqref{eq:max_condi} and this gives
    \begin{align*}
        \lim_{j \to \infty} \max_{u \in U} H(x^{(k_j)}(t, \cdot), p^{(k_j)}(t, \cdot), u,t) = \max_{u \in U} H(x^*(t, \cdot), p^*(t, \cdot), u, t), \quad \forall t \in [0,T].
    \end{align*}
    Combining the above two equalities, we have
    \begin{align*}
        H(x^*(t, \cdot), p^*(t, \cdot), u^*(t), t) = \max_{u \in U} H(x^*(t, \cdot), p^*(t, \cdot), u, t), \quad \forall t \in [0,T].
    \end{align*}
    By taking the limit $j \to \infty$ in the integral form of the forward and backward equations and invoking the bounded convergence theorem, we obtain
    \begin{align*}
        \begin{cases}
            x^*(t, \omega) = x_0(\omega) + \int_{0}^{t} \left( A(s, \omega; u^*(s)) x^*(s, \omega) + B(s, \omega; u^*(s)) \right) \dd s,                          \\
            p^*(t, \omega) = \int_{T }^{t} \left( - A(s, \omega; u^*(s))^\top p^*(s, \omega) + 2\alpha C^\top (C x^*(s, \omega) - \mathcal{F}(\omega)(s)) \right) \dd s, \\
        \end{cases} \forall t \in [0,T].
    \end{align*}
    Since the integrands on the right-hand side of the above equations are integrable on $[0,T]$, it follows that $x^*$ and $p^*$ are differentiable a.e. in $t$. By taking the derivative, we obtain
    \begin{align*}
        \begin{cases}
            \dot{x}^*(t, \omega) = A(t, \omega; u^*(t)) x^*(t, \omega) + B(t, \omega; u^*(t)), \text{ a.e. } t \in [0,T],                                       \\
            x^*(0, \omega) = x_0(\omega),                                                                                                                       \\
            \dot{p}^*(t, \omega) = - A(t, \omega; u^*(t))^\top p^*(t, \omega) + 2\alpha C^\top (C x^*(t, \omega) - \mathcal{F}(\omega)(t)), \text{ a.e. } t \in [0,T], \\
            p^*(T, \omega) = 0.
        \end{cases}
    \end{align*}
    Therefore, we conclude that $x^*$ and $p^*$ are the state and adjoint variables that correspond to $u^*$, and that the triple $(x^*, p^*, u^*)$ satisfies the necessary conditions given by Pontryagin's maximum principle.
\end{proof}

\section{Sufficient Conditions for Optimality} \label{sec:suf_opt}

In this section, we establish sufficient conditions under which an admissible control constitutes an optimal solution to both problems (\textbf{P}) and (\textbf{PS}). In particular, we prove that the limit control generated by Algorithm \ref{alg:iter_alg} in Theorem \ref{thm:conv_max_prin}, which satisfies the necessary conditions stated in Theorem \ref{thm:cont_max_prin_ps}, is indeed optimal when both the weighting parameters $\alpha$ and $\beta$ are chosen to be sufficiently large. These results provide a theoretical justification for the efficacy of the proposed iterative scheme and ensure that, under appropriate weighting, the resulting control achieves global optimality.

The following lemma is adapted from \cite[Chapter 3, Lemma 2.4]{yong1999stochastic}.

\begin{lemma}\label{lmm:clarke_2nd_deriv}
    Let $\varphi$ be a convex or concave function on $X \times U$ where $(X, \Vert \cdot \Vert)$ is a Banach space and $U \subseteq \mathbb{R}^m$ is a convex body. Assume that $\varphi(x,u)$ is differentiable in $x$ and $\varphi_x(x,u)$ is continuous in $(x,u)$. Then
    \begin{align*}
        \bigg\{ (\varphi_x(x^*, u^*), r) \mid r \in \partial_u \varphi(x^*, u^*) \bigg\} \subseteq \partial_{x,u} \varphi(x^*, u^*), \quad \forall (x^*, u^*) \in X \times U,
    \end{align*}
    where $\partial_u \varphi(x^*, u^*)$ and $\partial_{x,u} \varphi(x^*, u^*)$ are Clarke's generalized (partial) gradients of $\varphi$ at $(x^*, u^*)$.
\end{lemma}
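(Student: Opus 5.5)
We first record what Clarke's generalized gradient is for a convex (or concave) function. If $\varphi$ is convex on $X\times U$, then it is locally Lipschitz on the interior, and its Clarke gradient coincides with the convex subdifferential:
\[
\partial_{x,u}\varphi(x^*,u^*)=\{(\xi,r): \varphi(x,u)-\varphi(x^*,u^*)\ge \langle \xi,x-x^*\rangle+\langle r,u-u^*\rangle\ \ \forall (x,u)\}.
\]
The partial gradient $\partial_u\varphi(x^*,u^*)$ is likewise the convex subdifferential of $u\mapsto\varphi(x^*,u)$. The concave case follows by applying the convex case to $-\varphi$, using $\partial(-\varphi)=-\partial\varphi$. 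So we reduce to convex $\varphi$ and must show the following: if $r\in\partial_u\varphi(x^*,u^*)$, then $(\varphi_x(x^*,u^*),r)$ satisfies the joint subgradient inequality. At boundary points of the convex body $U$ we interpret the subdifferential relative to $U$, as in \cite{yong1999stochastic}.

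The key step is a convexity splitting. Fix $(x,u)$ and write
\[
\varphi(x,u)-\varphi(x^*,u^*)=[\varphi(x,u)-\varphi(x^*,u)]+[\varphi(x^*,u)-\varphi(x^*,u^*)].
\]
The second bracket is at least $\langle r,u-u^*\rangle$ by the choice of $r$. For the first bracket, convexity of $x\mapsto\varphi(x,u)$ and differentiability in $x$ give
\[
\varphi(x,u)-\varphi(x^*,u)\ge\langle\varphi_x(x^*,u),x-x^*\rangle.
\]
This carries the wrong base point $u$ instead of $u^*$, and fixing that is the main obstacle.

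To move the base point back to $u^*$, we use joint convexity more carefully. Set
\[
h(s)=\varphi\bigl(x^*+s(x-x^*),\,u^*+s(u-u^*)\bigr)
\]
for $s\in[0,1]$. This $h$ is convex, so $h(1)-h(0)\ge h'_+(0)$. We then compute $h'_+(0)$ by splitting the increment into an $x$-part and a $u$-part:
\[
h(s)-h(0)=[\varphi(x^*+s\Delta x,\,u^*+s\Delta u)-\varphi(x^*,\,u^*+s\Delta u)]+[\varphi(x^*,\,u^*+s\Delta u)-\varphi(x^*,u^*)].
\]
For the first piece, the mean value theorem in $x$ expresses it as $s\langle\varphi_x(x^*+\tau s\Delta x,\,u^*+s\Delta u),\Delta x\rangle$ for some $\tau\in(0,1)$. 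By continuity of $\varphi_x$ in $(x,u)$, this divided by $s$ tends to $\langle\varphi_x(x^*,u^*),\Delta x\rangle$. For the second piece, dividing by $s$ and taking $\liminf$ gives at least $\langle r,\Delta u\rangle$, since $r$ is a subgradient in $u$ at $u^*$. Hence
\[
\varphi(x,u)-\varphi(x^*,u^*)=h(1)-h(0)\ge\langle\varphi_x(x^*,u^*),x-x^*\rangle+\langle r,u-u^*\rangle,
\]
which is the claimed joint subgradient inequality.

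Finally, we check that the convex subdifferential agrees with Clarke's gradient for convex locally Lipschitz functions (Clarke's Proposition 2.2.7). This is standard; the only care needed is local Lipschitzness near $(x^*,u^*)$, which holds because a continuous convex function on an open convex set is locally Lipschitz. At boundary points of $U$ one uses the relative notion, and the argument above is unchanged since all segments stay in $X\times U$ by convexity.
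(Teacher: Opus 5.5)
Your argument is correct, but it takes a different route from the paper.

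The paper stays inside Clarke's framework throughout. For a fixed direction $(\xi,u)$ it chooses base points $(x_i,u^*)$ with $\Vert x_i-x^*\Vert\le h_i^2$. It then splits the quotient $[\varphi(x_i+h_i\xi,u^*+h_iu)-\varphi(x_i,u^*)]/h_i$ into three pieces. Two are bounded below by gradient inequalities in $x$ (convexity in $x$ for frozen $u$, plus continuity of $\varphi_x$), and the third by the subgradient $r$. This shows the Clarke directional derivative in direction $(\xi,u)$ dominates $\langle\varphi_x(x^*,u^*),\xi\rangle+\langle r,u\rangle$, which is membership in $\partial_{x,u}\varphi$ by definition. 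That argument needs only convexity in each variable separately, and never identifies the joint Clarke gradient with the convex subdifferential.

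You instead switch to convex subdifferentials at the outset (Clarke's Proposition 2.2.7). You then prove the global inequality $\varphi(x,u)-\varphi(x^*,u^*)\ge\langle\varphi_x(x^*,u^*),x-x^*\rangle+\langle r,u-u^*\rangle$ via the convex function $h$ on the segment. Here joint convexity is essential, for $h(1)-h(0)\ge h'_+(0)$. Your $x$-part is essentially the paper's first inequality; you could replace the mean value theorem by the gradient inequality at the base point $(x^*,u^*+s\Delta u)$.

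What your route buys:
\begin{itemize}
\item It delivers directly the global super/subgradient inequality that the proof of Theorem~\ref{thm:suf_opt_cond} actually uses right after invoking the lemma.
\item It only evaluates $\varphi$ on segments inside $X\times U$. The paper tacitly needs $u^*+h_iu\in U$ for arbitrary $u\in\mathbb{R}^m$, which fails for outward directions at boundary points of $U$. As you note, at such points the meaning of Clarke's gradient is a convention in either approach.
\end{itemize}

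One small point: continuity of $\varphi$ is not among the hypotheses, so your local Lipschitz claim needs a word of justification. At an interior point it follows from two facts: $\varphi_x$ is locally bounded (by a compactness argument in $u$), which gives Lipschitz dependence on $x$; and $\varphi(x,\cdot)$ is convex in finite dimensions, which gives Lipschitz dependence on $u$. Alternatively, you may regard it as implicit in speaking of Clarke gradients at all.
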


\begin{proof}
    First, we assume that $\varphi$ is convex. For any $\xi \in X$ and $u \in \mathbb{R}^m$, we choose a sequence $\{(x_i, h_i)\} \subseteq X \times \mathbb{R}$ in the following way:
    \begin{align*}
        (x_i, u^*) \in X \times U, \quad (x_i + h_i \xi, u^* + h_i u) \in X \times U,  \\
        h_i \downarrow 0 \text{ as } i \to \infty, \text{ and } \Vert x_i - x^*\Vert \leq h_i^2.
    \end{align*}
    By the convexity of $\varphi$, we have
    \begin{align*}
        \lim_{i \to \infty} & \dfrac{\varphi(x_i + h_i \xi, u^* + h_i u) - \varphi(x^*, u^* + h_i u) }{h_i}                                                                          \\
                            & \geq \lim_{i \to \infty} \dfrac{ \langle \varphi_x(x^*, u^* + h_i u), x_i - x^* + h_i \xi \rangle }{h_i } = \langle \varphi_x(x^*, u^*), \xi \rangle.
    \end{align*}
    Similarly,
    \begin{align*}
        \lim_{i \to \infty} \dfrac{\varphi(x^*, u^* + h_i u) - \varphi(x^*, u^* )}{h_i} \geq \langle r, u \rangle.
    \end{align*}
    Also,
    \begin{align*}
        \lim_{i \to \infty} \dfrac{\varphi(x^*, u^*) - \varphi(x_i, u^*)}{h_i} \geq \lim_{i \to \infty} \dfrac{\langle \varphi_x(x_i, u^*), x^* - x_i \rangle }{h_i} = 0.
    \end{align*}
    Adding up the above three inequalities, we get
    \begin{align*}
        \lim_{i \to \infty} \dfrac{\varphi(x_i + h_i \xi, u^* + h_i u) - \varphi(x_i, u^*) }{h_i} \geq \langle\varphi_x(x^*, u^*), \xi \rangle + \langle r, u \rangle.
    \end{align*}
    Thus $(\varphi_x(x^*, u^*), r) \in \partial_{x,u} \varphi(x^*, u^*)$ by the definition of the generalized gradient. In the case where $\varphi$ is concave, the desired result follows immediately by noting that $- \varphi$ is convex.
\end{proof}

The following theorem, adapted from \cite[Chapter 3, Theorem 2.5]{yong1999stochastic} to the ensemble case, gives a set of sufficient conditions for optimality in the general ensemble control problem (\textbf{P}).

\begin{theorem}\label{thm:suf_opt_cond}
    Let $(u^*(\cdot), \{x^*(\cdot, \omega)\}_{\omega \in \Omega})$ be an admissible pair for (\textbf{P}), and let $\{p^*(\cdot, \omega)\}_{\omega \in \Omega}$ be the associated adjoint variables. Let $X$ be a convex subset of $L^2(\Omega; \mathbb{R}^n)$. Assume that for problem (\textbf{P}), every admissible $(u(\cdot), \{x(\cdot, \omega)\}_{\omega \in \Omega})$ satisfies: $x(t, \cdot) \in X$ for each $t \in [0,T]$.

    For each $t \in [0,T]$, define the map
    \begin{align*}
        \mathcal{H}_t: X \times U & \to \mathbb{R}                            \\
        (x(\cdot), u)             & \mapsto H(x(\cdot), p^*(t, \cdot), u, t)
    \end{align*}
    where
    \begin{align*}
        H(x(\cdot), p^*(t, \cdot), u, t) = \mathbb{E}_{\omega} \left[ p^*(t, \omega) \cdot b(t, x(\omega), u, \omega) - L(t, x(\omega),u, \omega) \right].
    \end{align*}
    Assume that for each $\omega \in \Omega$, $g(\cdot, \omega)$ is convex, and for each $t \in [0,T]$, the map $\mathcal{H}_t$ is concave. Further, assume that for each $t \in [0,T]$, $\mathcal{H}_t$ is differentiable in $x(\cdot)$ and its partial Fr\'echet derivative with respect to $x(\cdot)$ is continuous in $(x(\cdot), u)$. Assume that the map $u \mapsto H( x^*(t, \cdot), p^*(t, \cdot), u, t)$ is locally Lipschitz continuous on $U$ for each $t \in [0,T]$.

    Then $(u^*(\cdot), \{x^*(\cdot, \omega)\}_{\omega \in \Omega})$ is optimal if the triple $(u^*(\cdot), \{x^*(\cdot, \omega)\}_{\omega \in \Omega}, \{p^*(\cdot, \omega)\}_{\omega \in \Omega})$ satisfies the maximum condition, that is,
    \begin{align}\label{eq:max_condi_suf}
        H(x^*(t, \cdot), p^*(t, \cdot), u^*(t), t) = \max_{u \in U} H( x^*(t, \cdot), p^*(t, \cdot), u, t), \text{ a.e. } t \in [0,T].
    \end{align}
\end{theorem}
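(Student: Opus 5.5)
Following the classical argument of \cite[Chapter 3, Theorem 2.5]{yong1999stochastic}, we fix an arbitrary admissible pair $(u(\cdot), \{x(\cdot,\omega)\}_{\omega \in \Omega})$ and write $\xi(t,\omega) := x(t,\omega) - x^*(t,\omega)$. The plan is to prove $J(u(\cdot)) - J(u^*(\cdot)) \geq 0$ by linearizing the cost around the reference pair and using the adjoint equation to trade the running-cost increments for Hamiltonian increments.

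\textbf{Step 1 (terminal cost and duality).} Convexity of $g(\cdot,\omega)$ gives
\[
g(x(T,\omega),\omega) - g(x^*(T,\omega),\omega) \geq \nabla_x g(x^*(T,\omega),\omega)\cdot \xi(T,\omega) = -p^*(T,\omega)\cdot\xi(T,\omega).
\]
Since $\xi(0,\omega)=0$, integration by parts in $t$ (for each $\omega$, with $p^*(\cdot,\omega)\in W^{1,1}$), the adjoint equation \eqref{eq:adj_eqn} and $\dot\xi = b(t,x,u,\omega) - b(t,x^*,u^*,\omega)$ give
\[
-p^*(T)\cdot\xi(T) = -\int_0^T \bigl[\dot p^*\cdot \xi + p^*\cdot\dot\xi\bigr]\,\dd t .
\]
We then take $\mathbb{E}_\omega$ and use Fubini, which is justified by the uniform boundedness assumptions on $b$, $L$ and their $x$-gradients. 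Substituting $\dot p^*$ and collecting terms yields
\[
J(u) - J(u^*) \geq -\int_0^T \Bigl\{ \mathcal{H}_t(x(t,\cdot),u(t)) - \mathcal{H}_t(x^*(t,\cdot),u^*(t)) - \bigl\langle \partial_x \mathcal{H}_t(x^*(t,\cdot),u^*(t)),\, \xi(t,\cdot)\bigr\rangle_{L^2} \Bigr\}\,\dd t .
\]
Here we use the identification $\partial_x\mathcal{H}_t(x^*,u^*)(\omega) = \nabla_x b^\top p^* - \nabla_x L$, so that the adjoint equation reads $\dot p^* = -\partial_x \mathcal{H}_t$.

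\textbf{Step 2 (concavity and the maximum condition).} Fix $t$ with \eqref{eq:max_condi_suf} holding. The map $u\mapsto \mathcal{H}_t(x^*(t,\cdot),u)$ is locally Lipschitz and attains its maximum over the convex set $U$ at $u^*(t)$. Hence $0 \in \partial_u \mathcal{H}_t(x^*(t,\cdot),u^*(t)) + N_U(u^*(t))$ in the Clarke sense, i.e., there is $r \in \partial_u \mathcal{H}_t(x^*,u^*)$ with $\langle r, v - u^*(t)\rangle \leq 0$ for all $v\in U$.

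Lemma~\ref{lmm:clarke_2nd_deriv}, applied to the concave function $\mathcal{H}_t$ on $X\times U$, gives $(\partial_x\mathcal{H}_t(x^*,u^*), r) \in \partial_{x,u}\mathcal{H}_t(x^*,u^*)$. For a concave function the Clarke gradient coincides with the superdifferential of convex analysis. Therefore
\[
\mathcal{H}_t(x,u) - \mathcal{H}_t(x^*,u^*) \leq \langle \partial_x \mathcal{H}_t(x^*,u^*), x - x^*\rangle + \langle r, u - u^*\rangle \leq \langle \partial_x \mathcal{H}_t(x^*,u^*), \xi\rangle .
\]
The integrand in Step 1 is thus nonpositive for a.e.\ $t$, so $J(u)\geq J(u^*)$.

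\textbf{Main obstacle.} The main obstacle is the identification of Clarke gradients with superdifferentials in the infinite-dimensional space $X\subset L^2(\Omega;\mathbb{R}^n)$. This requires $\mathcal{H}_t$ to be locally Lipschitz near $(x^*,u^*)$ and requires care when $X$ or $U$ has empty interior or $u^*(t)\in\partial U$. I would handle this either by working with one-sided directional derivatives along the segment from $(x^*,u^*)$ to $(x,u)$, which stays in $X\times U$ by convexity, or by restricting to the affine hull. A secondary technical point is the measurability and integrability needed for Fubini and for the integration by parts, which follow from the boundedness hypotheses.
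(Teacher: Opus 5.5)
Your proposal is correct and follows the same route as the paper's proof, which adapts \cite[Chapter 3, Theorem 2.5]{yong1999stochastic}: duality through the adjoint equation, integration by parts and convexity of $g$, then Lemma~\ref{lmm:clarke_2nd_deriv} together with concavity of $\mathcal{H}_t$ and the maximum condition to bound $\mathcal{H}_t(x,u)-\mathcal{H}_t(x^*,u^*)$ by $\langle \partial_x\mathcal{H}_t(x^*,u^*),\xi\rangle$. Choosing $r\in\partial_u\mathcal{H}_t(x^*,u^*)$ in the normal cone $N_U(u^*(t))$ is more careful than the paper's $0\in\partial_u H$ when $u^*(t)\in\partial U$, but the inclusion should read $0\in -\partial_u\mathcal{H}_t(x^*,u^*)+N_U(u^*(t))$; your ``i.e.'' clause states the correct condition, and that is what you actually use.
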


\begin{proof}
    Since the map $u \mapsto H( x^*(t, \cdot), p^*(t, \cdot), u, t)$ is locally Lipschitz continuous on $U$ for each $t \in [0,T]$, by \eqref{eq:max_condi_suf} and \cite[Chapter 3, Lemma 2.3-(iii)]{yong1999stochastic}, we have
    \begin{align*}
        0 \in \partial_u H(x^*(t, \cdot), p^*(t, \cdot), u^*(t), t).
    \end{align*}
    Under the assumption that $\mathcal{H}_t$ is differentiable in $x(\cdot)$ and the partial Fr\'echet derivative with respect to $x(\cdot)$ is continuous in $(x(\cdot), u)$, by Lemma \ref{lmm:clarke_2nd_deriv}, we conclude that
    \begin{align*}
        \bigg( \nabla_{x(\cdot)} H(x^*(t, \cdot), p^*(t, \cdot), u^*(t), t), 0 \bigg) \in \partial_{x(\cdot),u} H( x^*(t, \cdot), p^*(t, \cdot), u^*(t), t).
    \end{align*}
    Thus, by the concavity of $\mathcal{H}_t: (x(\cdot), u) \mapsto H(x(\cdot), p^*(t, \cdot), u, t)$, we have
    \begin{align*}
        \int_{0}^{T} H(x(t, \cdot), p^*(t, \cdot), u(t),  t) - H(x^*(t, \cdot), p^*(t, \cdot), u^*(t), t) \dd t  \\
        \leq \int_{0}^{T} \langle \nabla_{x(\cdot)} H(x^*(t, \cdot), p^*(t, \cdot), u^*(t), t), x(t, \cdot) - x^*(t, \cdot) \rangle \dd t,
    \end{align*}
    for any admissible pair $(u(\cdot), \{x(\cdot, \omega)\}_{\omega \in \Omega})$. For notational convenience, we let
    \begin{align*}
        h(x, p, u, t, \omega) := p \cdot b(t, x, u, \omega) - L(t, x, u, \omega),
    \end{align*}
    for every $x \in \mathbb{R}^n$, $p \in \mathbb{R}^n$, $u \in U$, $t \in[0,T]$ and $\omega \in \Omega$. Define $\xi(t, \omega) = x(t, \omega) - x^*(t, \omega)$, which satisfies
    \begin{align*}
        \begin{cases}
            \dot{\xi}(t, \omega) = \nabla_x b(t, x^*(t, \omega), u^*(t), \omega) \xi(t, \omega) + r(t, \omega), \quad \text{ a.e. } t \in [0,T], \\
            \xi(0, \omega) = 0,
        \end{cases}
    \end{align*}
    where
    \begin{align*}
        r(t, \omega) = - \nabla_x b(t, x^*(t, \omega), u^*(t), \omega) \xi(t, \omega) + b(t, x(t, \omega), u(t), \omega) - b(t, x^*(t, \omega), u^*(t), \omega).
    \end{align*}
    Noting that
    \begin{align*}
        \begin{cases}
            \dot{p}^*(t, \omega) = - \nabla_x b(t, x^*(t, \omega), u^*(t), \omega)^{\top} p^*(t, \omega) + \nabla_x L(t, x^*(t, \omega), u^*(t), \omega), \\
            p^*(T, \omega) = - \nabla_x g(x^*(T, \omega), \omega),
        \end{cases}
    \end{align*}
    we have
    \begin{align*}
        \langle \nabla_x g(x^*(T, \omega), \omega), \xi(T, \omega) \rangle & = - \langle p^*(T, \omega), \xi(T, \omega) \rangle + \langle p^*(0, \omega), \xi(0, \omega) \rangle                                                        \\
                                                                           & = - \int_{0 }^{T} \langle \nabla_x L(t, x^*(t, \omega), u^*(t), \omega), \xi(t, \omega) \rangle + \langle p^*(t, \omega), r(t, \omega) \rangle \dd t  \\
                                                                           & = \int_{0 }^{T} \langle \nabla_x h(x^*(t, \omega), p^*(t, \omega), u^*(t), t, \omega), \xi(t, \omega) \rangle \dd t                                        \\
                                                                           & \qquad - \int_{0 }^{T} \langle p^*(t, \omega), b(t, x(t, \omega), u(t), \omega) - b(t, x^*(t, \omega), u^*(t), \omega) \rangle \dd t.
    \end{align*}
    Hence, by integrating over $\Omega$ and changing the order of integration, we get
    \begin{align*}
             & \int_{0 }^{T} \int_{\Omega} \langle \nabla_x h( x^*(t, \omega), p^*(t, \omega), u^*(t), t), \xi(t, \omega) \rangle \dd \mu(\omega) \dd t                           \\
             & \quad - \int_{0 }^{T} \int_{\Omega }\langle p^*(t, \omega), b(t, x(t, \omega), u(t), \omega) - b(t, x^*(t, \omega), u^*(t), \omega) \rangle \dd \mu(\omega) \dd t  \\
        =    & \int_{0}^{T} \langle \nabla_{x(\cdot)} H(x^*(t, \cdot),  p^*(t, \cdot), u^*(t), t), x(t, \cdot) - x^*(t, \cdot) \rangle \dd t                                      \\
             & \quad - \int_{0 }^{T} \int_{\Omega }\langle p^*(t, \omega), b(t, x(t, \omega), u(t), \omega) - b(t, x^*(t, \omega), u^*(t), \omega) \rangle \dd \mu(\omega) \dd t  \\
        \geq & \int_{0}^{T} H(x(t, \cdot), p^*(t, \cdot), u(t),t) - H(x^*(t, \cdot), p^*(t, \cdot), u^*(t),t) \dd t                                                               \\
             & \quad - \int_{0 }^{T} \int_{\Omega }\langle p^*(t, \omega), b(t, x(t, \omega), u(t), \omega) - b(t, x^*(t, \omega), u^*(t), \omega) \rangle \dd \mu(\omega) \dd t  \\
        =    & - \int_{0 }^{T} \int_{\Omega} L(t, x(t, \omega), u(t), \omega) - L(t, x^*(t, \omega), u^*(t), \omega) \dd \mu(\omega) \dd t.
    \end{align*}
    On the other hand, by the convexity of $g(\cdot, \omega)$, we have
    \begin{align*}
        \int_{\Omega} \langle \nabla_x g(x^*(T, \omega), \omega), \xi(T, \omega) \rangle \dd \mu(\omega) \leq \int_{\Omega} g(x(T, \omega), \omega) - g(x^*(T, \omega), \omega) \dd \mu(\omega).
    \end{align*}
    Combining the above two, we arrive at
    \begin{align*}
        J(u^*(\cdot)) \leq J(u(\cdot)).
    \end{align*}
    Since $(u(\cdot), \{x(\cdot, \omega)\}_{\omega \in \Omega})$ is arbitrary, the desired result follows.
\end{proof}

\begin{corollary}
    Under Assumption \ref{asp:main_asp}, if
    \begin{align*}
        \alpha > \dfrac{ 1 }{2 \lambda_{\min}(C^\top C)}, \quad \beta > \dfrac{1}{2}M_P(H_A M_X + H_B) + \max\left\{C_E, \dfrac{1}{2}  M_P^2 G_A^2 \right\},
    \end{align*}
    then the admissible pair $(u^*(\cdot), \{x^*(\cdot, \omega)\}_{\omega \in \Omega})$ obtained in Theorem \ref{thm:conv_max_prin} is optimal for (\textbf{PS}). The constants $M_X$, $M_P$ and $C_E$ are defined in Lemma~\ref{lmm:a_priori} and Theorem~\ref{thm:conv_max_prin}.
\end{corollary}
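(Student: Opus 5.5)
The plan is to apply Theorem~\ref{thm:suf_opt_cond} to (\textbf{PS}) with $g\equiv 0$, taking as input the triple $(x^*,p^*,u^*)$ produced by Theorem~\ref{thm:conv_max_prin}. Since the assumed lower bound on $\beta$ exceeds $\beta_0$, that theorem applies. It tells us that $x^*$ and $p^*$ are the state and adjoint variables of $u^*$, that $\abs{p^*}\le M_P$, and that the maximum condition \eqref{eq:max_condi_suf} holds for every $t$. The terminal cost $g\equiv0$ is trivially convex. So the task reduces to choosing the convex set $X$ and checking the remaining hypotheses, of which joint concavity of $\mathcal{H}_t$ is the real work.

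For the set I take
\[
X:=\{x\in L^2(\Omega;\mathbb{R}^n): \abs{x(\omega)}\le M_X \text{ for } \mu\text{-a.e. } \omega\},
\]
which is convex. Every admissible state satisfies $x(t,\cdot)\in X$ by the Gr\"onwall bound of Lemma~\ref{lmm:a_priori}(1). That bound uses only $\abs{A}\le M_A$ and $\abs{B}\le M_B$, so it holds verbatim for merely measurable controls.

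Next come the regularity hypotheses. The map $x\mapsto\mathcal{H}_t(x,u)$ is a quadratic-plus-linear functional on $L^2$. Its Fr\'echet derivative is
\[
A(t,\cdot;u)^\top p^*(t,\cdot)-2\alpha C^\top\bigl(Cx-\mathcal{F}(\cdot)(t)\bigr),
\]
which is continuous in $(x,u)$ because $A$ is continuous in $u$ uniformly on the compact sets involved. Local Lipschitz continuity of $u\mapsto H(x^*,p^*,u,t)$ follows from the bounds $G_A$ and $G_B$ together with the compactness of $U$.

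The main step is the joint concavity of $\mathcal{H}_t$ on $X\times U$. Since $X\times U$ is convex and $\mathcal{H}_t$ is $C^2$ along segments, it suffices to show that the second derivative along any direction $(\xi,v)$ is nonpositive at every point $(x,u)\in X\times U$. Computing this second derivative gives
\begin{align*}
\mathbb{E}_\omega\Bigl[p^*\cdot\bigl(D_u^2A[v,v]\,x+D_u^2B[v,v]\bigr)\Bigr]-2\beta\abs{v}^2
+2\,\mathbb{E}_\omega\bigl[p^*\cdot D_uA[v]\,\xi\bigr]-2\alpha\,\mathbb{E}_\omega\abs{C\xi}^2 .
\end{align*}
I bound the three pieces separately.
\begin{itemize}
\item Using $\abs{x}\le M_X$ and $\abs{p^*}\le M_P$, the first two terms are at most $-\lambda\abs{v}^2$, where $\lambda=2\beta-M_P(H_AM_X+H_B)$.
\item By Cauchy--Schwarz in $\omega$, the cross term is at most $2M_PG_A\abs{v}\,\Vert\xi\Vert_{L^2}$.
\item The last term is at most $-2\alpha\lambda_{\min}(C^\top C)\Vert\xi\Vert_{L^2}^2$; here full column rank of $C$ gives $\lambda_{\min}(C^\top C)>0$.
\end{itemize}
The resulting two-variable quadratic form in $(\abs{v},\Vert\xi\Vert_{L^2})$ is nonpositive provided
\[
\bigl(2\alpha\lambda_{\min}(C^\top C)\bigr)\,\lambda\;\ge\;M_P^2G_A^2 .
\]
The hypothesis on $\alpha$ makes the first factor exceed $1$. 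The hypothesis on $\beta$ gives $\lambda>M_P^2G_A^2$. Together these yield the inequality.

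With all hypotheses of Theorem~\ref{thm:suf_opt_cond} verified, it yields $J(u^*)\le J(u)$ for every admissible $u$. The main obstacle is the mixed term $\mathbb{E}[p^*\cdot D_uA[v]\,\xi]$, which couples the finite-dimensional control direction with the infinite-dimensional state direction. The plan for it is exactly the Cauchy--Schwarz bound followed by the determinant condition on the $2\times2$ form, and this is precisely where the constant $\tfrac12M_P^2G_A^2$ in the hypothesis on $\beta$ is used.
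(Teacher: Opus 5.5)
Your proposal is correct and follows the paper's proof essentially step for step. It uses the same convex set $X$, the same reduction to Theorem~\ref{thm:suf_opt_cond} with $g\equiv 0$, and the same Hessian quadratic form with its three terms bounded in the same way. The only variation is how you close the mixed term: you use the $2\times 2$ determinant condition $2\alpha\lambda_{\min}(C^\top C)\,\lambda\ge M_P^2G_A^2$, whereas the paper uses an unweighted Young inequality that needs $2\alpha\lambda_{\min}(C^\top C)>1$ and $\lambda>M_P^2G_A^2$ separately; your criterion is marginally sharper and is implied by the stated hypotheses. Your remark that the Gr\"onwall bound also holds for merely measurable admissible controls usefully fills in a point the paper glosses over, since Lemma~\ref{lmm:a_priori} is stated only for continuous $u$.
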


\begin{proof}
    Define
    \begin{align*}
        X := \{ x(\cdot) \in L^2(\Omega; \mathbb{R}^n) \mid \abs{x(\omega)} \leq M_X \text{ a.e. } \omega \in \Omega\}.
    \end{align*}
    Clearly, $X$ is a convex set in $L^2(\Omega; \mathbb{R}^n)$. By Lemma \ref{lmm:a_priori}, any admissible pair $(u(\cdot), \{x(\cdot, \omega)\}_{\omega \in \Omega})$ for (\textbf{PS}) satisfies $x(t, \cdot) \in X$ for all $t \in [0,T]$. Recall that for (\textbf{PS}), the Hamiltonian $H$ is given by
    \begin{align*}
        H(x(t, \cdot), p(t, \cdot), u, t) := & \mathbb{E}_{\omega} \left[ p(t, \omega) \cdot (A(t, \omega; u) x(t, \omega) + B(t, \omega;u)) \right.  \\
                                             & \left. \qquad - \alpha \abs{C x(t, \omega) - \mathcal{F}(\omega)(t)}^2 - \beta \abs{u}^2 \right]
    \end{align*}
    Let $\{p^*(\cdot, \omega)\}_{\omega \in \Omega}$ be the adjoint variables that correspond to $(u^*(\cdot), \{x^*(\cdot, \omega)\}_{\omega \in \Omega})$. By Assumption \ref{asp:main_asp} (4) and (6), the map $u \mapsto H(x^*(t, \cdot), p^*(t, \cdot), u,t)$ is locally Lipschitz continuous for every $t \in [0,T]$. Fix any $t \in [0,T]$ and consider the map
    \begin{align*}
        \mathcal{H}_t: X \times U & \to \mathbb{R}                            \\
        (x(\cdot), u)             & \mapsto H(x(\cdot), p^*(t, \cdot), u, t)
    \end{align*}
    defined in Theorem \ref{thm:suf_opt_cond}. It follows from  Assumption \ref{asp:main_asp} that $\mathcal{H}_t$ is differentiable in $x(\cdot)$ and its partial derivative with respect to $x(\cdot)$ is continuous in $(x(\cdot), u)$. Then, according to Theorem \ref{thm:suf_opt_cond}, it remains to verify that $\mathcal{H}_t$ is concave. We compute the quadratic form of the Hessian $D^2 \mathcal{H}_t$ evaluated at $(x(\cdot), u) \in X \times U$ along the direction $(y(\cdot), v) \in L^2(\Omega; \mathbb{R}^n) \times \mathbb{R}^m$:
    \begin{multline*}
        Q[y, v] = - 2 \alpha \Vert C y\Vert_{L^2(\Omega; \mathbb{R}^{d'})}^2 + 2 \int_{\Omega} p^*(t, \omega) \cdot \left(D_u A(t, \omega; u) [v] \right)  y(\omega)  \dd \mu(\omega)  \\
        - 2 \beta \abs{v}^2 + \int_{\Omega} p^*(t,\omega) \cdot \left( D_u^2 A(t, \omega;u)[v,v] x(\omega) + D_u^2 B(t, \omega; u)[v,v] \right) \dd \mu(\omega).
    \end{multline*}
    By the Young's inequality and Assumption~\ref{asp:main_asp} (6), we have
    \begin{align*}
         & 2 \int_{\Omega} p^*(t, \omega) \cdot \left(D_u A(t, \omega; u) [v] \right)  y(\omega) \,\dd \mu(\omega)                                                                  \\
         & \qquad \qquad \leq  2 \int_{\Omega} \abs{ \left(D_u A(t, \omega; u) [v]\right)^{\top} p^*(t, \omega) }\, \abs{ y(\omega) } \,\dd \mu(\omega)                                               \\
         & \qquad \qquad \leq \left( \| \left(D_u A(t, \cdot; u)[v]\right)^{\top} p^*(t, \cdot) \|_{L^2(\Omega; \mathbb{R}^{n})}^2 +  \left\|y \right\|_{L^2(\Omega; \mathbb{R}^n)}^2 \right)  \\
         & \qquad \qquad \leq \left( (M_P G_A)^2 \abs{v}^2 + \left\|y \right\|_{L^2(\Omega; \mathbb{R}^n)}^2 \right).
    \end{align*}
    Since $x(\cdot) \in X$, again by Assumption~\ref{asp:main_asp} (6), we have
    \begin{align*}
         & \int_{\Omega} p^*(t,\omega) \cdot \left( D_u^2 A(t, \omega;u)[v,v] x(\omega) + D_u^2 B(t, \omega; u)[v,v] \right) \,\dd \mu(\omega)  \\
         & \qquad \qquad \leq M_P \int_{\Omega} \abs{ D_u^2 A(t, \omega; u)[v,v] x(\omega) + D_u^2 B(t, \omega; u)[v,v]} \,\dd \mu(\omega)      \\
         & \qquad \qquad \leq  M_P \left( H_A M_X + H_B \right) \abs{v}^2 .
    \end{align*}
    Finally, we have
    \begin{align*}
        \left\|y \right\|^2_{L^2(\Omega; \mathbb{R}^n)} \leq \dfrac{1 }{\lambda_{\min}(C^\top C)} \left\| Cy \right\|^2_{L^2(\Omega; \mathbb{R}^{d'})} ,
    \end{align*}
    where $\lambda_{\min}(C^\top C)>0$ since $C$ has full column rank and so $C^\top C$ is positive definite. Combining the above estimates, we arrive at
    \begin{align*}
        Q[y,v] & \leq \left(-2 \alpha +  \dfrac{ 1 }{\lambda_{\min}(C^\top C)} \right) \Vert C y \Vert_{L^2(\Omega; \mathbb{R}^{d'})}^2  \\
               & \quad + \left( - 2 \beta + M_P^2 G_A^2 + M_P (H_A M_X + H_B) \right) \abs{v}^2.
    \end{align*}
    Therefore, given that 
    \begin{align*}
        \alpha > \dfrac{ 1 }{2 \lambda_{\min}(C^\top C)}, \quad \beta > \dfrac{1}{2} (M_P^2 G_A^2 + M_P(H_A M_X + H_B)),
    \end{align*}
    we have $Q[y,v] < 0$ for all $(y(\cdot),v) \in L^2(\Omega; \mathbb{R}^n) \times \mathbb{R}^m \setminus \{(0,0)\}$, that is, the Hessian $D^2 \mathcal{H}_t$ is negative definite, and hence $\mathcal{H}_t$ is strictly concave. Since all hypotheses of Theorem \ref{thm:suf_opt_cond} are now verified — in particular, $g = 0$ for (\textbf{PS}) satisfies the convexity assumption trivially, and $(u^*, \{x^*\})$ satisfies the maximum condition \eqref{eq:max_condi_suf} by Theorem \ref{thm:conv_max_prin} — we conclude that $(u^*(\cdot), \{x^*(\cdot, \omega)\}_{\omega \in \Omega})$ is optimal for (\textbf{PS}).
\end{proof}

\appendix

\section{Omitted Proof in Section~\ref{sec:alg_conv}}\label{sec:app}

\begin{proof}[Proof of Lemma~\ref{lmm:a_priori}]
    Let $u(\cdot)$ be continuous. Under Assumption~\ref{asp:main_asp}, for each $\omega \in \tilde{\Omega}$, there exists a unique solution $x(\cdot, \omega)$ to the state equation \eqref{eq:ssm_dyn} that corresponds to $u(\cdot)$. Since $x(\cdot,\omega)$ is continuous as the solution of \eqref{eq:ssm_dyn}, under Assumption \ref{asp:main_asp}, there also exists a unique solution $p(\cdot, \omega)$ to the adjoint equation \eqref{eq:adj_eqn_ps} that corresponds to $u(\cdot)$ and $x(\cdot, \omega)$. To establish (1), notice that $\{x(\cdot, \omega)\}_{\omega \in \Omega}$ satisfies the following integral form of the forward equation:
    \begin{align*}
        x(t, \omega) = x_0(\omega) + \int_{0}^{t} B(s, \omega; u(s)) \dd s + \int_{0}^{t} A(s, \omega; u(s)) x(s, \omega) \dd s, \quad \forall (t, \omega) \in [0,T] \times \Omega.
    \end{align*}
    Taking the maximum norm on both sides gives
    \begin{align*}
        \abs{x(t, \omega)} \leq \abs{x_0(\omega)} & + \int_{0}^{t} \abs{B(s, \omega; u(s))} \dd s                                                                               \\
                                                  & + \int_{0}^{t} \abs{A(s, \omega; u(s))} \cdot \abs{x(s, \omega)} \dd s, \quad \forall (t, \omega) \in [0,T] \times \Omega.
    \end{align*}
    By Gr\"onwall's inequality (see, for instance, \cite[Lemma 2.7]{teschl2012ordinary}),
    \begin{align*}
        \abs{x(t, \omega)} & \leq \bigg( \abs{x_0(\omega)} + \int_{0}^{t} \abs{B(s, \omega; u(s))} \dd s \bigg) \exp\bigg( \int_{0}^{t} \abs{A(s, \omega; u(s))} \dd s\bigg)  \\
                           & \leq  (\Vert x_0 \Vert + T M_B) e^{TM_A} = M_X, \quad \forall (t, \omega) \in [0,T] \times \Omega.
    \end{align*}
    Similarly, $\{p(\cdot, \omega)\}_{\omega \in \Omega}$ satisfies the integral form of the adjoint equation:
    \begin{align*}
        p(t, \omega) & = \int^{t}_{T} - A(s, \omega; u(s))^\top p(s, \omega) \dd s                                                                               \\
                     & \qquad + 2\alpha \int_{T }^{t} C^\top (Cx(s, \omega) - \mathcal{F}(\omega)(s)) \dd s, \quad \forall (t, \omega) \in [0,T] \times \Omega.
    \end{align*}
    Taking the maximum norm on both sides, we have
    \begin{align*}
        \abs{p(t, \omega)} & \leq \int_{t }^{T} \abs{A(s, \omega; u(s))}\cdot \abs{p(s, \omega)} \dd s                                                                                                     \\
                           & + 2 \alpha \abs{C} \int_{t }^{T} \big( \abs{C} \cdot \abs{x(s, \omega) } + \abs{\mathcal{F}(\omega)(s)} \big) \dd s, \quad \forall (t, \omega) \in [0,T] \times \Omega.
    \end{align*}
    By Gr\"onwall's inequality,
    \begin{align*}
        \abs{p(t, \omega)} & \leq 2 \alpha \abs{C} \int_{t }^{T} \big( \abs{C} \cdot \abs{x(s, \omega)} + \abs{ \mathcal{F}(\omega)(s)} \big) \dd s \cdot \exp \bigg( \int_{t }^{T} \abs{A(s, \omega; u(s))} \dd s \bigg)  \\
                           & \leq 2 \alpha T \abs{C} ( \abs{C} M_X + M_{\Omega'}) e^{T M_A} = M_P, \quad \forall (t, \omega) \in [0,T] \times \Omega.
    \end{align*}
    This proves (1).

    To establish (2), we proceed in two steps. The first step is to show that $x(t, \omega)$ is jointly continuous in $(t, \omega)$. Since, for every $\omega \in \Omega$, $x(t, \omega)$ is continuous with respect to $t$ as the solution of \eqref{eq:ssm_dyn}, it suffices to show that $x(t, \omega)$ is continuous with respect to $\omega$ uniformly in $t$. We take $\omega \in \Omega$, $B$ some open ball around $\omega$ such that the closure of $B$ lies in $\tilde{\Omega}$, and $h \in C([0,T]; \mathbb{R}^{d})$ such that $\omega + h \in B$. We want to show
    \begin{align}\label{eq:uni_cont}
        \sup_{t \in [0,T]}\, \abs{x(t, \omega+h) - x(t, \omega)} \to 0 \text{ as } \Vert h \Vert \to 0.
    \end{align}
    Let $t \in [0,T]$. By the state equation \eqref{eq:ssm_dyn},
    \begin{align*}
        \abs{x(t, \omega+h) - x(t, \omega)} & \leq \abs{x_0(\omega + h) - x_0(\omega)} + \int_{0}^{t} \abs{B(s, \omega+h; u(s)) - B(s, \omega; u(s))} \,\dd s +  \\
                                            & \qquad + \int_{0}^{t}\abs{A(s, \omega+h; u(s)) x(s, \omega+h) - A(s, \omega; u(s)) x(s, \omega)} \,\dd s           \\
                                            & \leq \abs{x_0(\omega + h) - x_0(\omega)} + \int_{0}^{t} \abs{B(s, \omega+h; u(s)) - B(s, \omega; u(s))} \,\dd s +  \\
                                            & \qquad + \int_{0}^{t}\abs{A(s, \omega+h; u(s)) - A(s, \omega; u(s))} \cdot \abs{x(s, \omega+h)} \,\dd s +          \\
                                            & \qquad + \int_{0}^{t} \abs{A(s, \omega; u(s))} \cdot \abs{x(s, \omega+h) - x(s, \omega)} \,\dd s                   \\
                                            & \leq \abs{x_0(\omega + h) - x_0(\omega)} + \int_{0}^{t} \abs{B(s, \omega+h; u(s)) - B(s, \omega; u(s))} \,\dd s +  \\
                                            & \qquad + M_X\int_{0}^{t} \abs{A(s, \omega+h; u(s)) - A(s, \omega; u(s))} \,\dd s +                                 \\
                                            & \qquad + M_A \int_{0}^{t} \abs{x(s, \omega+h) - x(s, \omega)} \,\dd s.
    \end{align*}
    The last inequality follows from Assumption~\ref{asp:main_asp} (5) and the a priori estimate (1). By Gr\"onwall's inequality,
    \begin{align*}
        \abs{x(t, \omega+h) - x(t, \omega)} & \leq \bigg( \abs{x_0(\omega + h) - x_0(\omega)} + \int_{0}^{t} \abs{B(s, \omega+h; u(s)) - B(s, \omega; u(s))} \,\dd s \, +  \\
                                            & \qquad + M_X \int_{0}^{t} \abs{A(s, \omega+h; u(s)) - A(s, \omega; u(s))} \,\dd s \bigg) e^{M_At},
    \end{align*}
    which implies
    \begin{align*}
         & \sup_{t \in [0,T]} \abs{x(t, \omega+h) - x(t, \omega)} \leq e^{M_AT} \bigg( \abs{x_0(\omega + h) - x_0(\omega)} \,+                                               \\
         & \qquad + \int_{0}^{T} \abs{B(s, \omega+h; u(s)) - B(s, \omega; u(s))} \,\dd s + M_X \int_{0}^{T} \abs{A(s, \omega+h; u(s)) - A(s, \omega; u(s))} \,\dd s \bigg),
    \end{align*}
    The initial condition $x_0: \Omega \to \mathbb{R}^n$ is continuous; thus, we have
    \begin{align*}
        \lim_{\Vert h \Vert \to 0}\, \abs{x_0(\omega + h) - x_0(\omega)} = 0.
    \end{align*}
    By Assumption~\ref{asp:main_asp} (5), it follows from the dominated convergence theorem that
    \[
        \lim_{\Vert h \Vert \to 0}\, \int_{0}^{T} \abs{B(s, \omega+h; u(s)) - B(s, \omega; u(s))} \,\dd s = 0,
    \]
    and
    \[
        \lim_{\Vert h \Vert \to 0}\, \int_{0}^{T} \abs{A(s, \omega+h; u(s)) - A(s, \omega; u(s))} \,\dd s = 0.
    \]
    Combining the above results gives \eqref{eq:uni_cont}.

    The next step is to show that $x(t, \omega)$ is continuously Fr\'echet differentiable in $(t,\omega)$. We begin by showing that $x(t, \omega)$ is Fr\'echet differentiable with respect to $\omega$ and $\partial_{\omega} x(t, \omega)$ satisfies the first-order variational equation \eqref{eq:var_eqn_x}. Fix $t \in [0,T]$ and $\omega \in \Omega$. Under Assumption~\ref{asp:main_asp}, we know that there exists a unique solution $z(t, \omega)$ to the equation \eqref{eq:var_eqn_x}. As before, we take some open ball $B$ around $\omega$ such that the closure of $B$ lies in $\tilde{\Omega}$ and $h \in C([0,T]; \mathbb{R}^{d})$ such that $\omega + h \in B$. We want to show that
    \begin{align}\label{eq:def_fr_diff}
        \lim_{\Vert h \Vert \to 0} \dfrac{\abs{x(t, \omega+h) - x(t, \omega) - z(t, \omega)[h]}}{\Vert h \Vert} = 0.
    \end{align}
    By the state equation \eqref{eq:ssm_dyn} and the first-order variational equation \eqref{eq:var_eqn_x},
    \begin{align*}
        x(t, \omega+h) - x(t, \omega) - z(t, \omega)[h] & = x_0(\omega+h) - x_0(\omega) - \partial_\omega x_0(\omega)[h] +                                               \\
        + \int_{0}^{t}                                  & B(s, \omega+h; u(s)) - B(s, \omega; u(s)) - \partial_\omega B(s, \omega; u(s))[h] \,\dd s +                    \\
        + \int_{0}^{t}                                  & (A(s, \omega+h; u(s)) - A(s, \omega; u(s)) - \partial_{\omega} A(s, \omega; u(s))[h] ) x(s, \omega) \,\dd s +  \\
        + \int_{0}^{t}                                  & (A(s, \omega+h; u(s)) - A(s, \omega; u(s)) ) (x(s, \omega+h) - x(s, \omega))  \,\dd s +                        \\
        + \int_{0}^{t}                                  & A(s, \omega; u(s)) (x(s, \omega+h) - x(s, \omega) - z(s, \omega)[h]) \,\dd s.
    \end{align*}
    By Assumption~\ref{asp:main_asp} (5) and the a priori estimate (1),
    \begin{align*}
        \abs{x(t, \omega+h) - x(t, \omega) - z(t, \omega)[h]} & \leq \abs{x_0(\omega+h) - x_0(\omega) - \partial_\omega x_0(\omega)[h]} +                                         \\
        + \int_{0}^{t}                                        & \abs{B(s, \omega+h; u(s)) - B(s, \omega; u(s)) - \partial_\omega B(s, \omega; u(s)) [h] } \,\dd s +               \\
        + M_X                                                 & \int_{0}^{t} \abs{A(s, \omega+h; u(s)) - A(s, \omega; u(s)) - \partial_{\omega} A(s, \omega; u(s))[h]} \,\dd s +  \\
        + \int_{0}^{t}                                        & \abs{A(s, \omega+h; u(s)) - A(s, \omega; u(s))}\cdot  \abs{x(s, \omega+h) - x(s, \omega)}  \,\dd s +              \\
        + M_A                                                 & \int_{0}^{t} \abs{x(s, \omega+h) - x(s, \omega) - z(s, \omega)[h]} \,\dd s.
    \end{align*}
    Dividing both sides by $\Vert h \Vert$ and by Gr\"onwall's inequality,
    \begin{align*}
        \dfrac{\abs{x(t, \omega+h) - x(t, \omega) - z(t, \omega)[h]}}{\Vert h \Vert} & \leq e^{M_A t} \bigg( \dfrac{\abs{x_0(\omega+h) - x_0(\omega) - \partial_\omega x_0(\omega)[h] }}{\Vert h \Vert} +                       \\
        + \int_{0}^{t}                                                               & \dfrac{\abs{B(s, \omega+h; u(s)) - B(s, \omega; u(s)) - \partial_\omega B(s, \omega; u(s))[h] }}{\Vert h \Vert} \,\dd s +                \\
        + M_X                                                                        & \int_{0}^{t} \dfrac{\abs{A(s, \omega+h; u(s)) - A(s, \omega; u(s)) - \partial_{\omega} A(s, \omega; u(s))[h]}}{\Vert h \Vert} \,\dd s +  \\
        + \int_{0}^{t}                                                               & \dfrac{\abs{A(s, \omega+h; u(s)) - A(s, \omega; u(s))}}{\Vert h \Vert} \cdot  \abs{x(s, \omega+h) - x(s, \omega)}  \,\dd s  \bigg).
    \end{align*}
    Since $x_0: \Omega \to \mathbb{R}^n$ is Fr\'echet differentiable,
    \begin{align*}
        \lim_{\Vert h \Vert \to 0}\dfrac{\abs{x_0(\omega+h) - x_0(\omega) - \partial_\omega x_0(\omega)[h] }}{\Vert h \Vert} = 0.
    \end{align*}
    By Assumption~\ref{asp:main_asp} (7) and the dominated convergence theorem, as $\Vert h \Vert \to 0$, we have
    \begin{align*}
        \int_{0}^{t} \dfrac{\abs{B(s, \omega+h; u(s)) - B(s, \omega; u(s)) - \partial_\omega B(s, \omega; u(s))[h] }}{\Vert h \Vert} \,\dd s \to 0,
    \end{align*}
    and
    \begin{align*}
        \int_{0}^{t} \dfrac{\abs{A(s, \omega+h; u(s)) - A(s, \omega; u(s)) - \partial_{\omega} A(s, \omega; u(s))[h]}}{\Vert h \Vert} \,\dd s \to 0.
    \end{align*}
    Finally, notice that,
    \[
        \sup_{s \in [0,t]} \dfrac{\abs{A(s, \omega+h; u(s)) - A(s, \omega; u(s))}}{\Vert h \Vert} \leq \sup_{s \in [0,t], \tau \in [0,1]} \Vert \partial_{\omega} A(s, \omega+ \tau h; u(s))\Vert_{\mathrm{op}},
    \]
    and, by Assumption~\ref{asp:main_asp} (7), the right-hand side of the inequality is bounded by
    \[
        \sup_{(s, \omega',u) \in [0,T]\times \tilde{\Omega} \times U} \Vert \partial_{\omega} A(s, \omega'; u) \Vert_{\mathrm{op}} < \infty
    \]
    whenever $\omega+h \in B$. In the previous step, we have shown that
    \begin{align*}
        \sup_{s \in [0,t]} \abs{x(s, \omega+h) - x(s, \omega)} \to 0 \text{ as } \Vert h \Vert \to 0.
    \end{align*}
    Therefore,
    \begin{align*}
        \int_{0}^{t} \dfrac{\abs{A(s, \omega+h; u(s)) - A(s, \omega; u(s))}}{\Vert h \Vert} \cdot  \abs{x(s, \omega+h) - x(s, \omega)}  \,\dd s \to 0 \text{ as } \Vert h \Vert \to 0.
    \end{align*}
    Combining the above results gives \eqref{eq:def_fr_diff}. This means $x(t, \omega)$ is differentiable with respect to $\omega \in \Omega$ and $\partial_{\omega}x(t, \omega)$ satisfies the first-order variational equation \eqref{eq:var_eqn_x}. As in the first step, by applying the Gr\"onwall's inequality to \eqref{eq:var_eqn_x} and using the fact the Fr\'echet derivative $\partial_{\omega} x_0$ is continuous, one may show that $\partial_{\omega} x(t, \omega)$ is continuous with respect to $\omega$ uniformly in $t$. Moreover, for each $\omega$, $\partial_{\omega} x(t, \omega)$ is continuous with respect to $t$ as the solution of \eqref{eq:var_eqn_x}. Therefore, $\partial_{\omega} x(t, \omega)$ is jointly continuous in $(t, \omega)$. This completes the proof of (2) since all partial derivatives, including the one with respect to $t$, are jointly continuous. By arguing in the same way, one can prove (3) as well.
\end{proof}

\bibliographystyle{plain}
\bibliography{refs}
\end{document}